\numberwithin{equation}{section}
\newtheorem{thm}{Theorem}[section]
\newtheorem{prop}[thm]{Proposition}
\newtheorem{lem}[thm]{Lemma}
\newtheorem{cor}[thm]{Corollary}
\theoremstyle{definition}
\newtheorem{rem}[thm]{Remark}
\begin{document}

\title[Quadratic relations for $q$MZV]
{Quadratic relations for \\ a $q$-analogue of multiple zeta values}
\author{Yoshihiro Takeyama}
\address{Department of Mathematics, 
Graduate School of Pure and Applied Sciences, 
Tsukuba University, Tsukuba, Ibaraki 305-8571, Japan}
\email{takeyama@math.tsukuba.ac.jp}

\begin{abstract}
We obtain a class of quadratic relations for a $q$-analogue of multiple zeta values ($q$MZV's). 
In the limit $q \to 1$, it turns into Kawashima's relation for multiple zeta values. 
As a corollary we find that $q$MZV's satisfy the linear relation contained in Kawashima's relation. 
In the proof we make use of a $q$-analogue of Newton series and 
Bradley's duality formula for finite multiple harmonic $q$-series. 
\end{abstract}
\maketitle

\setcounter{section}{0}
\setcounter{equation}{0}


\section{Introduction}

In this paper we prove quadratic relations for a $q$-analogue of 
multiple zeta values ($q$MZV's, for short). 
The relations are of a similar form to Kawashima's relation for 
multiple zeta values (MZV's). 

First let us recall the definition of $q$MZV \cite{Bnote, Z}. 
Let $\mathbf{k}=(k_{1}, \ldots , k_{r})$ be an $r$-tuple of positive integers 
such that $k_{1}\ge 2$. 
Then $q$MZV $\zeta_{q}(\mathbf{k})$ is a $q$-series defined by 
\begin{align}
\zeta_{q}(\mathbf{k}):=\sum_{m_{1}>\cdots >m_{r}>0}
\frac{q^{(k_{1}-1)m_{1}+\cdots +(k_{r}-1)m_{r}}}{[m_{1}]^{k_{1}}\cdots [m_{r}]^{k_{r}}},  
\label{eq:def-qMZV}
\end{align}
where $[n]$ is the $q$-integer 
\begin{align}
[n]:=\frac{1-q^{n}}{1-q}.  
\label{eq:def-q-integer}
\end{align}
Since $k_{1}\ge 2$, 
the right hand side of \eqref{eq:def-qMZV} is well-defined as a formal power series of $q$. 
If we regard $q$ as a complex variable, it is absolutely convergent in $|q|<1$.  
In the limit as $q \to 1$, $q$MZV turns into MZV defined by 
\begin{align*}
\zeta({\mathbf{k}}):=\sum_{m_{1}>\cdots >m_{r}>0}
\frac{1}{m_{1}^{k_{1}}\cdots m_{r}^{k_{r}}}.    
\end{align*}
An interesting point is that $q$MZV's satisfy many relations 
in the same form as those for MZV's. 
For example, $q$MZV's satisfy Ohno's relation \cite{O}, the cyclic sum formula \cite{HO, OW} and 
Ohno-Zagier's relation \cite{OZ}. 
See \cite{Bnote} for the proof of Ohno's relation and the cyclic sum formula for $q$MZV's, 
and see \cite{OT} for that of Ohno-Zagier's relation.  

In this paper we prove a class of quadratic relations for $q$MZV's 
(see Theorem \ref{thm:Kawashima-quad-modified} below). 
In the limit as $q \to 1$, it turns into Kawashima's relation \cite{Kaw} for MZV's. 
Some of our relations are linear, and 
they are completely the same as the linear part of Kawashima's relation 
(Corollary \ref{cor:Kawashima-linear}). 
It is known that the linear part of Kawashima's relation contains 
Ohno's relation \cite{Kaw} and the cyclic sum formula \cite{TW}, 
and hence we find again that $q$MZV's also satisfy them. 

The proof of our quadratic relations proceeds in a similar manner to that of Kawashima's relation. 
The ingredients are a $q$-analogue of Newton series and 
finite multiple harmonic $q$-series. 
Let $b=\{b(n)\}_{n=0}^{\infty}$ be a sequence of formal power series in $q$. 
Then we define a sequence $\nabla_{q}(b)$ by 
\begin{align*}
\nabla_{q}(b)(n):=\sum_{i=0}^{n}q^{i}\frac{(q^{-n})_{i}}{(q)_{i}}\,b(i) 
\end{align*}
and consider the series 
\begin{align*}
f_{\nabla_{q}(b)}(z):=\sum_{n=0}^{\infty}\nabla_{q}(b)(n)\,z^{n}\frac{(z^{-1})_{n}}{(q)_{n}},  
\end{align*} 
where $(x)_{n}$ is the $q$-shifted factorial 
\begin{align}
(x)_{n}:=\prod_{j=0}^{n-1}(1-xq^{j}).  
\label{eq:def-q-shifted}
\end{align}
Under some condition for $\nabla_{q}(b)$, the series $f_{\nabla_{q}(b)}(z)$ 
is well-defined as an element of $\mathbb{Q}[[q, z]]$ and 
it satisfies $f_{\nabla_{q}(b)}(q^{m})=b(m)$ for $m \ge 0$ (see Proposition \ref{prop:interpolation}). 
Thus the series $f_{\nabla_{q}(b)}(z)$ interpolates the sequence $b$, 
and can be regarded as a $q$-analogue of Newton series. 
It has a nice property:  
\begin{align}
f_{\nabla_{q}(b_{1})}f_{\nabla_{q}(b_{2})}=f_{\nabla_{q}(b_{1}b_{2})}.  
\label{eq:product-Newton}
\end{align}
Now consider the finite multiple harmonic $q$-series $S_{\mathbf{k}}(n)$ defined by 
\begin{align*}
S_{\mathbf{k}}(n):=\sum_{n \ge m_{1} \ge \cdots \ge m_{r} \ge 1}
\frac{q^{m_{1}+m_{2}+\cdots +m_{r}}}{[m_{1}]^{k_{1}} \cdots [m_{r}]^{k_{r}}}.   
\end{align*} 
Any product of $S_{\mathbf{k}}$'s can be written as a linear combination of them 
with coefficients in $\mathbb{Q}[(1-q)]$. 
The duality formula due to Bradley \cite{B} (see Proposition \ref{prod:finite-duality} below)  
implies that the coefficients are $q$MZV's in the expansion of $f_{\nabla_{q}(S_{\bf k})}$ at $z=1$.  
Therefore, by expanding 
$f_{\nabla_{q}(S_{\bf k})}f_{\nabla_{q}(S_{\mathbf{k}'})}=f_{\nabla_{q}(S_{\bf k}S_{\bf k'})}$ at $z=1$, 
we obtain quadratic relations for $q$MZV's.   

The paper is organized as follows. 
In Section \ref{sec:alg-str} we define finite multiple harmonic $q$-series 
and describe their algebraic structure by making use of a non-commutative polynomial ring. 
In Section \ref{sec:Newton} we define a $q$-analogue of Newton series and 
prove the key relation \eqref{eq:product-Newton}.  
In Section \ref{sec:Kawashima} we prove the quadratic relations and see that 
$q$MZV's satisfy the linear part of Kawashima's relation for MZV's. 

In this paper we denote by $\mathbb{N}$ the set of non-negative integers.


\section{Algebraic structure of finite multiple harmonic $q$-series}\label{sec:alg-str}

\subsection{Finite multiple harmonic $q$-series}

Let $\hbar$ be a formal variable and $\mathcal{C}:=\mathbb{Q}[\hbar]$ the coefficient ring. 
Denote by $\mathfrak{h}^{1}$ the non-commutative polynomial algebra over $\mathcal{C}$ 
freely generated by the set of alphabets $\{z_{n}\}_{n=1}^{\infty}$. 
We define the {\it depth} of a word $u=z_{i_{1}}\cdots z_{i_{r}}$ by ${\rm dep}(u):=r$. 

Let $\mathcal{R}:=\mathbb{Q}[[q]]$ be the ring of formal power series in $q$. 
We endow $\mathcal{R}$ with $\mathcal{C}$-module structure such that 
$\hbar$ acts as multiplication by $1-q$.
Denote by $\mathcal{R}^{\mathbb{N}}$ the set of sequences $b=\{b(n)\}_{n=0}^{\infty}$ 
of formal power series $b(n) \in \mathcal{R}$. 
Then $\mathcal{R}^{\mathbb{N}}$ is a $\mathcal{C}$-algebra with the product defined by 
$(bc)(n):=b(n)\,c(n)$ for $b, c\in \mathcal{R}^{\mathbb{N}}$. 

For a word $u=z_{k_{1}} \cdots z_{k_{r}}$,  
we define $S_u, A_u, A^{\star}_u \in \mathcal{R}^{\mathbb{N}}$ by 
\begin{align}
S_{u}(n)&:=
\sum_{n \ge m_{1} \ge \cdots \ge m_{r} \ge 1}
\frac{q^{m_{1}+m_{2}+\cdots +m_{r}}}{[m_{1}]^{k_{1}} \cdots [m_{r}]^{k_{r}}}, 
\label{eq:def-S} \\ 
A_{u}(n)&:=
\sum_{n \ge m_{1} >\cdots >m_{r}>0}
\frac{q^{(k_{1}-1)m_{1}+(k_{2}-1)m_{2}+\cdots +(k_{r}-1)m_{r}}}
{[m_{1}]^{k_{1}} \cdots [m_{r}]^{k_{r}}}, 
\label{eq:def-A} \\ 
A^{\star}_{u}(n)&:=
\sum_{n \ge m_{1} \ge \cdots \ge m_{r}\ge 1}
\frac{q^{(k_{1}-1)m_{1}+(k_{2}-1)m_{2}+\cdots +(k_{r}-1)m_{r}}}
{[m_{1}]^{k_{1}} \cdots [m_{r}]^{k_{r}}},    
\end{align}
where $[n]$ is the $q$-integer defined by \eqref{eq:def-q-integer}. 
Setting $S_{1}(n), A_{1}(n), A^{\star}_{1}(n) \equiv 1$, 
we extend the correspondence $u \mapsto S_{u}, A_{u}, A^{\star}_{u}$ to 
the $\mathcal{C}$-linear map $S, A, A^{\star} \, : \mathfrak{h}^{1} \to \mathcal{R}^{\mathbb{N}}$. 

Let $\mathfrak{h}_{>0}^{1}$ be the $\mathcal{C}$-submodule consisting of non-constant elements, that is, 
$\mathfrak{h}_{>0}^{1}=\sum_{k=1}^{\infty}
\sum_{i_{1}, \ldots , i_{k}\ge 1}\mathcal{C}z_{i_{1}}\cdots z_{i_{k}}$. 
For a word $u=z_{k_{1}} \cdots z_{k_{r}} \in \mathfrak{h}_{>0}^{1}$, set 
$s_u, a_u \in \mathcal{R}^{\mathbb{N}}$ by 
\begin{align}
s_{u}(n)&:=\sum_{n+1=m_{1} \ge m_{2} \ge \cdots \ge m_{r} \ge 1}
\frac{q^{k_{1}m_{1}+(k_{2}-1)m_{2}+\cdots +(k_{r}-1)m_{r}}}
{[m_{1}]^{k_{1}} \cdots [m_{r}]^{k_{r}}}, \\  
a_{u}(n)&:=\sum_{n+1=m_{1}>m_{2}> \cdots >m_{r}>0}
\frac{q^{(k_{1}-1)m_{1}+(k_{2}-1)m_{2}+\cdots +(k_{r}-1)m_{r}}}
{[m_{1}]^{k_{1}} \cdots [m_{r}]^{k_{r}}}. 
\label{eq:def-a}
\end{align}
Extending by $\mathcal{C}$-linearity 
we define two maps $s, a: \mathfrak{h}_{>0}^{1} \to \mathcal{R}^{\mathbb{N}}$. 
Note that if $w=z_{i}w' \, (w, w' \in \mathfrak{h}^{1}, i \ge 1)$ we have 
\begin{align}
s_{w}(n)=\frac{q^{i(n+1)}}{[n+1]^{i}}\,A^{\star}_{w'}(n+1), \qquad 
a_{w}(n)=\frac{q^{(i-1)(n+1)}}{[n+1]^{i}}\,A_{w'}(n). 
\label{eq:sa-to-A}
\end{align}

We define a map $\phi: \mathfrak{h}^{1} \to \mathfrak{h}^{1}$ as follows. 
For a word $u=z_{k_{1}} \cdots z_{k_{r}}$, 
consider the set $I_{u}=\{\sum_{i=1}^{j}k_{i} \, |\, 1\le j <r\}$. 
Denote by $I_{u}^{c}$ the set consisting of positive integers 
which are less than or equal to $\sum_{i=1}^{r}k_{i}$ not belonging to $I_{u}$. 
Set $I_{u}^{c}=\{p_{1}, \ldots , p_{l}\} \, (p_{1}<\cdots <p_{l})$ 
and define $\phi(u):=z_{k'_{1}}\cdots z_{k'_{l}}$, where 
$k'_{1}=p_{1}, \, k'_{i}=p_{i}-p_{i-1}\, (2 \le i \le l)$. 
Extending by $\mathcal{C}$-linearity $\phi$ is defined as a map on $\mathfrak{h}^{1}$. 
Note that $\phi^{2}={\rm id}$. 

Now we define a $\mathcal{C}$-linear map 
$\nabla_{q} : \mathcal{R}^{\mathbb{N}} \to \mathcal{R}^{\mathbb{N}}$ by 
\begin{align*}
\nabla_{q}(b)(n):=\sum_{i=0}^{n}q^{i}\frac{(q^{-n})_{i}}{(q)_{i}}\,b(i), 
\end{align*}
where $(x)_{n}$ is the $q$-shifted factorial defined by \eqref{eq:def-q-shifted}.
The following duality formula is due to Bradley \cite{B} (see also \cite{Kaw-q}): 

\begin{prop}\label{prod:finite-duality}
For $w \in \mathfrak{h}_{>0}^{1}$, we have 
\begin{align*}
\nabla_{q}(S_{w})(n)=\left\{ 
\begin{array}{ll}
0 & (n=0), \\
{}-s_{\phi(w)}(n-1) & (n\ge 1). 
\end{array}
\right.
\end{align*}
\end{prop}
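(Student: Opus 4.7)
The case $n=0$ is immediate: for $w \in \mathfrak{h}^1_{>0}$ the defining sum for $S_w(0)$ is empty, so $\nabla_q(S_w)(0) = S_w(0) = 0$. For $n \ge 1$, my plan is to interchange the order of summation in $\nabla_q(S_w)(n) = \sum_{i=0}^n q^i (q^{-n})_i/(q)_i \, S_w(i)$. Writing $w = z_{k_1}\cdots z_{k_r}$ and expanding $S_w(i)$ as a nested sum yields
\begin{align*}
\nabla_q(S_w)(n) = \sum_{n \ge m_1 \ge \cdots \ge m_r \ge 1} \frac{q^{m_1 + \cdots + m_r}}{[m_1]^{k_1} \cdots [m_r]^{k_r}} \sum_{i = m_1}^{n} q^i \frac{(q^{-n})_i}{(q)_i}.
\end{align*}

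The inner sum admits a closed form. By the terminating $q$-binomial theorem one has $\sum_{i=0}^{n} q^i (q^{-n})_i/(q)_i = (q^{1-n})_n$, which vanishes for $n \ge 1$, so the tail equals the negative of the head. An induction on $m$ using the $q$-Pascal recurrence then gives
\begin{align*}
\sum_{i=m}^{n} q^i \frac{(q^{-n})_i}{(q)_i} = (-1)^{m} q^{\binom{m}{2} - (m-1)n} \binom{n-1}{m-1}_q, \quad 1 \le m \le n.
\end{align*}
Substituting this converts $\nabla_q(S_w)(n)$ into a single iterated sum over the chain $(m_1, \ldots, m_r)$ carrying an extra factor of $\binom{n-1}{m_1-1}_q$ and a power of $q$.

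To identify the outcome with $-s_{\phi(w)}(n-1)$ I would induct on the depth $r$ of $w$. The base case $r = 1$, where $w = z_k$ and $\phi(w) = z_1^k$, reduces to expanding $\binom{n-1}{m-1}_q$ combinatorially as a generating series over strictly increasing chains in $\{1, \ldots, n-1\}$ and recognising the resulting iterated sum as $s_{z_1^k}(n-1)$. For the inductive step with $w = z_k u$ and $\phi(u) = z_{l_1}\cdots z_{l_L}$, one checks directly from the definition of $\phi$ that $\phi(z_k u) = z_1^{k-1} z_{l_1+1} z_{l_2} \cdots z_{l_L}$; this recursive description mirrors the splitting of the iterated sum at the variable $m_1$, and the inductive hypothesis applied to $u$ completes the argument.

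The main obstacle is the combinatorial bookkeeping: after substituting the partial-sum identity, the effective number of summation variables depends on $m_1$ (through the expansion of $\binom{n-1}{m_1-1}_q$), whereas $s_{\phi(w)}(n-1)$ is a sum of fixed length $K - r + 1$ with $K = k_1 + \cdots + k_r$. The induction on depth resolves this by reducing everything to the depth-$1$ case, where the matching of $q$-exponents between the two sides can be carried out by direct combinatorial expansion.
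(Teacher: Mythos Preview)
The paper does not give its own proof of this proposition; it is stated as Bradley's duality formula with a citation to \cite{B} (and \cite{Kaw-q}). So there is no argument in the paper to compare against --- your task here is to supply a proof from scratch.

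Your opening moves are sound: the interchange of summation is legitimate, and the closed form
\[
\sum_{i=m}^{n} q^{i}\frac{(q^{-n})_{i}}{(q)_{i}}
=(-1)^{m}q^{\binom{m}{2}-(m-1)n}{n-1 \brack m-1}_{q}
\]
is correct and useful. The trouble begins with the induction.

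\textbf{Base case.} For $w=z_{k}$ the identity to prove is
\[
\sum_{m=1}^{n}\frac{q^{m}}{[m]^{k}}(-1)^{m}q^{\binom{m}{2}-(m-1)n}{n-1 \brack m-1}_{q}
=-\frac{q^{n}}{[n]}\sum_{n\ge m_{2}\ge\cdots\ge m_{k}\ge 1}\frac{1}{[m_{2}]\cdots[m_{k}]}.
\]
The right-hand side is a complete homogeneous symmetric function in $1/[1],\ldots,1/[n]$, while the left-hand side carries the factor $1/[m]^{k}$. Expanding ${n-1 \brack m-1}_{q}$ over increasing chains in $\{1,\ldots,n-1\}$ produces sums of monomials in $q$, not the reciprocals $1/[j]$ that you need. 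So the claimed ``combinatorial expansion'' does not by itself yield the right-hand structure; a genuinely new identity (essentially the $r=1$ case of Bradley's theorem, itself a one-parameter family in $k$) is still required.

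\textbf{Inductive step.} Splitting at $m_{1}$ leaves
\[
\sum_{m_{1}=1}^{n}\frac{q^{m_{1}}}{[m_{1}]^{k}}\,T(m_{1},n)\,S_{u}(m_{1}),
\]
where $T(m_{1},n)$ is your partial sum. The induction hypothesis you invoke is $\nabla_{q}(S_{u})(\,\cdot\,)=-s_{\phi(u)}(\,\cdot\,-1)$, but no $\nabla_{q}$ acts on $S_{u}(m_{1})$ here --- the operator has already been consumed in producing $T(m_{1},n)$. So the hypothesis does not apply, and the recursion for $\phi$ that you correctly identified, $\phi(z_{k}u)=z_{1}^{k-1}z_{l_{1}+1}z_{l_{2}}\cdots z_{l_{L}}$, is left without a matching analytic recursion on the sum side.

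In Bradley's proof the induction runs on the \emph{weight} $K=k_{1}+\cdots+k_{r}$, not the depth, and each step uses a telescoping/partial-fraction identity for $1/[m]$ against the $q$-binomial factor; this is the missing mechanism that converts powers $1/[m]^{k}$ into iterated sums of $1/[m_{j}]$. If you want a self-contained argument, that is the ingredient to supply.
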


\begin{rem}
The operator $\nabla_{q}$ has another description. 
Consider the difference operator 
$\Delta_{t}\,:\mathcal{R}^{\mathbb{N}} \to \mathcal{R}^{\mathbb{N}}$ defined by 
$\Delta_{t}(b)(n):=b(n)-t b(n+1)$. 
Then we have 
$\nabla_{q}(b)(n)=(\Delta_{q^{-(n-1)}} \circ \cdots \circ \Delta_{q^{-1}} \circ \Delta_{1}(b))(0)$ 
(see Corollary 2.7 in \cite{Kaw-q}). 
\end{rem}

\subsection{A $q$-analogue of multiple zeta (star) values}

Introduce the valuation $v: \mathcal{R} \to \mathbb{Z}_{\ge 0}\cup\{+\infty\}$ 
defined by $v(f):=\inf\{j \,|\, c_{j}\not=0\}$ for $f=\sum_{j=0}^{\infty}c_{j}q^{j}$,   
and endow $\mathcal{R}$ with the topology such that 
the sets $\{f+\phi\,|\, v(\phi)\ge n\} \, (n=0, 1, \ldots)$ form neighborhood base 
at $f \in \mathcal{R}$. 
Then $\mathcal{R}$ is complete. 

Denote by $\mathfrak{h}^{0}$ the $\mathcal{C}$-submodule of $\mathfrak{h}^{1}$ 
generated by $1$ and the words $z_{k_{1}} \cdots z_{k_{r}}$ satisfying $r \ge 1$ and $k_{1} \ge 2$. 
If $w \in \mathfrak{h}^{0}$, 
the limit $\lim_{n \to \infty}A_{w}(n)=\sum_{k=0}^{\infty}a_{w}(k)$ converges in 
$\mathcal{R}$ since $v(a_{w}(n))\ge n+1$. 
We call it a {\it $q$-analogue of multiple zeta value} ($q$MZV) and denote it by $\zeta_{q}(w)$.  
If $w$ is a word $w=z_{k_{1}} \cdots z_{k_{r}} \in \mathfrak{h}^{0}$, 
it is given by 
\begin{align*}
\zeta_{q}(z_{k_{1}} \cdots z_{k_{r}}):=\sum_{m_{1} >\cdots >m_{r}>0}
\frac{q^{(k_{1}-1)m_{1}+(k_{2}-1)m_{2}+\cdots +(k_{r}-1)m_{r}}}
{[m_{1}]^{k_{1}} \cdots [m_{r}]^{k_{r}}}. 
\end{align*}
We also define a {\it $q$-analogue of multiple zeta star value} ($q$MZSV) 
(or that of {\it non-strict multiple zeta value}) by 
\begin{align*}
\zeta_{q}^{\star}(z_{k_{1}} \cdots z_{k_{r}}):=\sum_{m_{1} \ge \cdots \ge m_{r}\ge 1}
\frac{q^{(k_{1}-1)m_{1}+(k_{2}-1)m_{2}+\cdots +(k_{r}-1)m_{r}}}
{[m_{1}]^{k_{1}} \cdots [m_{r}]^{k_{r}}}.   
\end{align*}
If we regard $q$ as a complex variable, $q$MZV and $q$MZSV 
are absolutely convergent in $|q|<1$. 
Therefore in fact they are convergent series. 
In the limit $q \to 1$, they converge to multiple zeta values (MZV) 
and multiple zeta star values (or non-strict multiple zeta values) defined by 
\begin{align*}
\zeta(z_{k_{1}} \cdots z_{k_{r}}):=\sum_{m_{1} >\cdots >m_{r}>0}
\frac{1}{m_{1}^{k_{1}} \cdots m_{r}^{k_{r}}} 
\end{align*}
and 
\begin{align*}
\zeta^{\star}(z_{k_{1}} \cdots z_{k_{r}}):=\sum_{m_{1} \ge \cdots \ge m_{r}\ge 1}
\frac{1}{m_{1}^{k_{1}} \cdots m_{r}^{k_{r}}},    
\end{align*}
respectively.  

\subsection{Algebraic structure}

Let $\mathfrak{z}$ be the $\mathcal{C}$-submodule of $\mathfrak{h}^{1}$ generated by $\{z_{i}\}_{i=1}^{\infty}$. 
We define three products $\circ, \circ_{\pm}$ on $\mathfrak{z}$ by setting 
\begin{align*}
z_{i}\circ z_{j}:=z_{i+j}, \qquad 
z_{i}\circ_{\pm} z_{j}:=\pm z_{i+j}+\hbar z_{i+j-1} 
\end{align*}
and extending by $\mathcal{C}$-linearity. 
These products are associative and commutative. 

Define two $\mathcal{C}$-bilinear products $*_{\pm}$ on $\mathfrak{h}^{1}$ inductively by 
\begin{align}
& 
1*_{\pm} w=w, \quad w*_{\pm} 1=w, 
\nonumber \\ 
& 
(z_{i}w_{1})*_{\pm}(z_{j}w_{2})=
z_{i}(w_{1}*_{\pm}z_{j}w_{2})+z_{j}(z_{i}w_{1}*_{\pm}w_{2})+(z_{i}\circ_{\pm}z_{j})(w_{1}*_{\pm}w_{2}) 
\label{eq:prod-recurrence}
\end{align}
for $i, j\ge 1$ and $w, w_{1}, w_{2} \in \mathfrak{h}^{1}$. 
These products are commutative. 

\begin{prop}\label{prop:prod-S}
Let $w_{1}, w_{2} \in \mathfrak{h}^{1}$. Then 
$S_{w_{1}}S_{w_{2}}=S_{w_{1}*_{-}w_{2}}$ and $A_{w_{1}}A_{w_{2}}=A_{w_{1}*_{+}w_{2}}$. 
\end{prop}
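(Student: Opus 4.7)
The plan is to prove both identities simultaneously by induction on the total depth $d := {\rm dep}(w_1) + {\rm dep}(w_2)$. The base cases, where either $w_1$ or $w_2$ equals $1$, are immediate from $S_{1}(n) = A_{1}(n) \equiv 1$ and the unit law $1 *_{\pm} w = w$ built into the recurrence \eqref{eq:prod-recurrence}.

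For the inductive step, I would write $w_1 = z_i u_1$ and $w_2 = z_j u_2$ and extract the top summation index of each sum. For $A$ this gives $A_{z_i u}(n) = \sum_{n \ge m \ge 1} \frac{q^{(i-1)m}}{[m]^i} A_u(m-1)$, and for $S$ it gives $S_{z_i u}(n) = \sum_{n \ge m \ge 1} \frac{q^{m}}{[m]^i} S_u(m)$. Multiplying the two extractions and partitioning the resulting double sum according to whether the two top indices $m_1, l_1$ satisfy $m_1 > l_1$, $m_1 < l_1$, or $m_1 = l_1$ expresses $A_{w_1}(n) A_{w_2}(n)$ (resp.\ $S_{w_1}(n) S_{w_2}(n)$) as a sum of three pieces. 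The aim is to match these three pieces against the three summands on the right-hand side of \eqref{eq:prod-recurrence}.

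For $A$ the match is direct: in the case $m_1 > l_1$, the inner product $A_{u_1}(m_1-1)\,A_{w_2}(m_1-1)$ collapses by induction to $A_{u_1 *_+ z_j u_2}(m_1-1)$, producing $A_{z_i(u_1 *_+ z_j u_2)}(n)$; the case $m_1 < l_1$ is symmetric; and the diagonal case $m_1 = l_1$ produces $\sum_{n \ge m \ge 1}\frac{q^{(i+j-2)m}}{[m]^{i+j}}\,A_{u_1 *_+ u_2}(m-1)$, which the identity $q^{m} + (1-q)[m] = 1$ rewrites as $A_{z_{i+j}(u_1 *_+ u_2)}(n) + \hbar\,A_{z_{i+j-1}(u_1 *_+ u_2)}(n) = A_{(z_i \circ_+ z_j)(u_1 *_+ u_2)}(n)$.

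For $S$ the subtlety is that extracting the top in the non-strict sum leaves the inner sums at $m_1$ (rather than $m_1 - 1$), so the case $m_1 > l_1$ delivers $S_{u_1}(m_1)\,S_{w_2}(m_1-1)$, with mismatched arguments that block a direct application of the inductive hypothesis. I would resolve this by substituting $S_{w_2}(m_1-1) = S_{w_2}(m_1) - \frac{q^{m_1}}{[m_1]^j} S_{u_2}(m_1)$ (and symmetrically in the case $m_1 < l_1$), so that all inner factors are evaluated at the same point. After this alignment, the three instances of $\sum_{n \ge m \ge 1}\frac{q^{2m}}{[m]^{i+j}}\,S_{u_1 *_- u_2}(m)$ arising from the three cases combine into a single one with a minus sign, which the identity $q^{m} = 1 - \hbar [m]$ turns into $-S_{z_{i+j}(u_1 *_- u_2)}(n) + \hbar\,S_{z_{i+j-1}(u_1 *_- u_2)}(n) = S_{(z_i \circ_- z_j)(u_1 *_- u_2)}(n)$, matching the last term of the $*_-$ recurrence. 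The hard part is exactly this alignment step for $S$; once it is in place, everything else reduces to bookkeeping around the identity $q^m + (1-q)[m] = 1$.
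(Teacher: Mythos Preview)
Your argument is correct and matches the paper's: verify that $S_{w_1}S_{w_2}$ and $A_{w_1}A_{w_2}$ obey the recurrence \eqref{eq:prod-recurrence} by splitting the double sum over the top indices and invoking the identity $q^{m}+(1-q)[m]=1$ (equivalently, the two displayed identities the paper quotes). The alignment step you flag as the hard part for $S$ can be bypassed by partitioning as $\{m_{1}\ge l_{1}\}+\{m_{1}\le l_{1}\}-\{m_{1}=l_{1}\}$ instead of the disjoint $>,<,=$; with the non-strict inequality the inner sum over $l_{1}\le m_{1}$ is exactly $S_{w_{2}}(m_{1})$, so the induction hypothesis applies immediately and the diagonal already enters with the desired minus sign.
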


\begin{proof}
It suffices to show that 
$S_{w_{1}}S_{w_{2}}$ and $A_{w_{1}}A_{w_{2}}$ 
follow the recurrence relation \eqref{eq:prod-recurrence} for $*_{-}$ and $*_{+}$, respectively. 
It can be checked by using 
\begin{align*}
\frac{q^{2m}}{[m]^{i+j}}=\frac{q^{m}}{[m]^{i+j}}-(1-q)\frac{q^{m}}{[m]^{i+j-1}} 
\end{align*}
and 
\begin{align}
\frac{q^{(i+j-2)m}}{[m]^{i+j}}=\frac{q^{(i+j-1)m}}{[m]^{i+j}}+(1-q)\frac{q^{(i+j-2)m}}{[m]^{i+j-1}}  
\label{eq:separation}
\end{align}
for $i, j \ge 1$. 
\end{proof}

The products $\circ$ and $\circ_{+}$ determine $\mathfrak{z}$-module structures on $\mathfrak{h}^{1}$, 
which we denote by the same letters $\circ$ and $\circ_{+}$, such that 
\begin{align*}
z_{i}\circ 1:=0, \qquad z_{i} \circ (z_{j}w):=(z_{i} \circ z_{j})w \quad 
(i, j\ge 1, \, w \in \mathfrak{h}^{1}) 
\end{align*}
and the above formulas where $\circ$ is replaced with $\circ_{\pm}$. 
By convention we set $z_{0}\circ c=0$ for $c \in \mathcal{C}$ and 
$z_{0} \circ w=w$ for $w \in \mathfrak{h}_{>0}^{1}$. 
Then $z_{i}\circ_{+} w=(z_{i}+\hbar z_{i-1})\circ w$ for 
$i\ge 1$ and $w \in \mathfrak{h}^{1}$. 
 
Consider the $\mathcal{C}$-linear map $d_{q}$ on $\mathfrak{h}^{1}$ defined inductively by 
\begin{align*}
d_{q}(1)=1, \qquad 
d_{q}(z_{i}w)=z_{i}\,d_{q}(w)+z_{i}\circ_{+}d_{q}(w) \quad (i\ge 1, \, w \in \mathfrak{h}^{1}).  
\end{align*}
Note that $d_{q}$ is invertible and its inverse satisfies 
$d_{q}^{-1}(1)=1$ and 
\begin{align*}
d_{q}^{-1}(z_{i}w)=z_{i}\,d_{q}^{-1}(w)-z_{i}\circ_{+}d_{q}^{-1}(w) \quad 
(i\ge 1, \, w \in \mathfrak{h}^{1}).   
\end{align*}

\begin{lem}\label{lem:action-commute}
For $i \ge 1$ and $w \in \mathfrak{h}^{1}$, we have $d_{q}(z_{i}\circ w)=z_{i} \circ d_{q}(w)$, 
that is, the map $d_{q}$ commutes with the action $\circ$ of $\mathfrak{z}$.  
\end{lem}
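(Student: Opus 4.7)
The plan is to reduce the identity to a single auxiliary equality about the $\mathfrak{z}$-module structure on $\mathfrak{h}^1$, and then verify that equality directly from the definitions of $\circ$ and $\circ_+$. Since both $d_q$ and $z_i \circ (-)$ are $\mathcal{C}$-linear, I may assume without loss of generality that $w$ is a word. The base case $w = 1$ is immediate, since both sides vanish by $z_i \circ 1 = 0$.

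For $w = z_j w'$ with $j \ge 1$, the left-hand side is computed by first using $z_i \circ (z_j w') = z_{i+j}\, w'$ and then the recursion defining $d_q$:
\begin{align*}
d_q(z_i \circ w) = z_{i+j}\, d_q(w') + z_{i+j} \circ_+ d_q(w').
\end{align*}
For the right-hand side, unfolding $d_q(z_j w') = z_j\, d_q(w') + z_j \circ_+ d_q(w')$ and applying $z_i \circ (-)$, while using $z_i \circ (z_j\, d_q(w')) = z_{i+j}\, d_q(w')$ (since $z_j\,d_q(w')$ is a sum of words each starting with $z_j$), yields
\begin{align*}
z_i \circ d_q(w) = z_{i+j}\, d_q(w') + z_i \circ \bigl(z_j \circ_+ d_q(w')\bigr).
\end{align*}
Matching the two expressions reduces the lemma to showing
\begin{align*}
z_i \circ (z_j \circ_+ v) = z_{i+j} \circ_+ v \qquad (i, j \ge 1,\ v \in \mathfrak{h}^1).
\end{align*}

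To prove this identity I expand $z_j \circ_+ v = z_j \circ v + \hbar\, z_{j-1} \circ v$ and use the associativity $z_i \circ (z_k \circ v) = z_{i+k} \circ v$ for $k \ge 1$, which is immediate on words: both sides equal $z_{i+k+l_1} z_{l_2}\cdots z_{l_r}$ on $v = z_{l_1}\cdots z_{l_r}$ and vanish on $v = 1$. For $j \ge 2$ this at once gives $z_i \circ (z_j \circ_+ v) = z_{i+j}\circ v + \hbar\, z_{i+j-1}\circ v = z_{i+j}\circ_+ v$. The main obstacle --- and the only subtle point --- is the subcase $j = 1$, where $z_{j-1} = z_0$ lies outside the original generating set and is governed by the separate conventions $z_0 \circ c = 0$ for $c \in \mathcal{C}$ and $z_0 \circ u = u$ for $u \in \mathfrak{h}^1_{>0}$. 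Decomposing $v = c + v_{>0}$ with $c \in \mathcal{C}$ and $v_{>0} \in \mathfrak{h}^1_{>0}$, one gets $z_0 \circ v = v_{>0}$, while $z_i \circ c = 0$ forces $z_i \circ (z_0 \circ v) = z_i \circ v_{>0} = z_i \circ v$. Hence $z_i \circ (z_1 \circ_+ v) = z_{i+1} \circ v + \hbar\, z_i \circ v = z_{i+1}\circ_+ v = z_{i+j}\circ_+ v$, so the identity holds uniformly in $j \ge 1$ and the lemma follows.
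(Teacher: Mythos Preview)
Your proof is correct and follows essentially the same route as the paper: reduce to words, unfold the recursion for $d_q$ on both sides, and match them via the auxiliary identity $z_i \circ (z_j \circ_+ v) = (z_i \circ z_j)\circ_+ v$. The only cosmetic difference is that the paper verifies this identity by invoking $(z_i \circ z_j)\circ_+ z_k = z_i \circ (z_j \circ_+ z_k)$ in $\mathfrak{z}$ and transporting it to the module, whereas you expand $z_j \circ_+ v = (z_j + \hbar z_{j-1})\circ v$ and use associativity of the $\circ$-action, which forces you to treat the $j=1$ case via the $z_0$ conventions; the paper's formulation sidesteps that edge case automatically since $j+k-1 \ge 1$.
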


\begin{proof}
For $w=1$ it is trivial. 
Suppose that $w$ is a word and set $w=z_{j}w'$. 
Then we have 
$d_{q}(z_{i}\circ w)=d_{q}(z_{i+j}w')=z_{i+j}\,d_{q}(w')+z_{i+j}\circ_{+}d_{q}(w')=
 z_{i}\circ(z_{j}d_{q}(w'))+(z_{i}\circ z_{j})\circ_{+}d_{q}(w')$.
{}Using $(z_{i} \circ z_{j}) \circ_{+} z_{k}=z_{i} \circ (z_{j} \circ_{+} z_{k})$ for $i, j, k \ge 1$, 
we see that $(z_{i}\circ z_{j})\circ_{+}d_{q}(w')=z_{i}\circ(z_{j}\circ_{+}d_{q}(w'))$.  
Thus we get $d_{q}(z_{i}\circ w)=z_{i}\circ d_{q}(w)$ for a word $w$. 
{}From the $\mathcal{C}$-linearity of $d_{q}$, we obtain the lemma. 
\end{proof}

\begin{prop}\label{prop:to-star}
Let $w \in  \mathfrak{h}^{1}$. Then $A^{\star}_{w}=A_{d_{q}(w)}$.  
\end{prop}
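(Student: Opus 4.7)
The plan is to induct on the depth of $w$. The base case $\mathrm{dep}(w)=0$ is immediate since $d_q(1)=1$ and $A^\star_1 = A_1 \equiv 1$; by $\mathcal{C}$-linearity it suffices to treat words, so I write $w = z_i v$. The definitions yield the recursions
\begin{align*}
A^\star_{z_i v}(n) = \sum_{m=1}^{n} \frac{q^{(i-1)m}}{[m]^i}\, A^\star_v(m), \qquad A_{z_i u}(n) = \sum_{m=1}^{n} \frac{q^{(i-1)m}}{[m]^i}\, A_u(m-1),
\end{align*}
the mismatch $m$ versus $m-1$ encoding precisely the difference between non-strict and strict inequalities.

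The key idea is to extract the diagonal contribution. Assuming $v = z_j v'$ (the case $v=1$ being trivial since $d_q(z_i) = z_i$), I split $A^\star_v(m) = A^\star_v(m-1) + \tfrac{q^{(j-1)m}}{[m]^j}\, A^\star_{v'}(m)$ and substitute. After invoking the induction hypothesis $A^\star_v = A_{d_q(v)}$, the off-diagonal piece becomes $A_{z_i d_q(v)}(n)$. In the diagonal piece the crucial factor is $\tfrac{q^{(i+j-2)m}}{[m]^{i+j}}$, which the separation identity \eqref{eq:separation} rewrites as $\tfrac{q^{(i+j-1)m}}{[m]^{i+j}} + \hbar\,\tfrac{q^{(i+j-2)m}}{[m]^{i+j-1}}$, so this piece equals $A^\star_{(z_i \circ_+ z_j)v'}(n)$. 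Because $(z_i \circ_+ z_j)v'$ has depth $\mathrm{dep}(w)-1$, induction converts it to $A_{d_q((z_i \circ_+ z_j)v')}(n)$.

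It then remains to identify $z_i d_q(v) + d_q((z_i \circ_+ z_j)v')$ with $d_q(z_i v) = z_i d_q(v) + z_i \circ_+ d_q(v)$; equivalently, to prove $z_i \circ_+ d_q(v) = d_q((z_i \circ_+ z_j)v')$. I expect this last algebraic bookkeeping to be the main obstacle. It falls out of Lemma \ref{lem:action-commute} combined with the factorization $z_i \circ_+ = (z_i + \hbar z_{i-1}) \circ$: the left side becomes $(z_i + \hbar z_{i-1}) \circ d_q(v)$, and commuting the action past $d_q$ via Lemma \ref{lem:action-commute} gives $d_q((z_i + \hbar z_{i-1}) \circ v) = d_q(z_i \circ_+ v) = d_q((z_i \circ_+ z_j)v')$. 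In the boundary case $i=1$ one invokes the convention $z_0 \circ u = u$ on $\mathfrak{h}^1_{>0}$, which is compatible with Lemma \ref{lem:action-commute} because $d_q$ visibly preserves $\mathfrak{h}^1_{>0}$. Applying $A$ to the resulting identity in $\mathfrak{h}^1$ then closes the induction.
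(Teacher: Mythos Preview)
Your proof is correct. Both arguments induct on depth and both hinge on the separation identity \eqref{eq:separation}; the difference lies in where the splitting is performed. You split on the $A^{\star}$ side, writing $A^{\star}_{v}(m)=A^{\star}_{v}(m-1)+\tfrac{q^{(j-1)m}}{[m]^{j}}A^{\star}_{v'}(m)$, which forces a second application of the induction hypothesis (on $(z_{i}\circ_{+}z_{j})v'$) and then the algebraic identity $z_{i}\circ_{+}d_{q}(v)=d_{q}(z_{i}\circ_{+}v)$, for which you invoke Lemma~\ref{lem:action-commute} and the $z_{0}$ convention. The paper instead establishes once and for all the summation formula
\[
\sum_{m=1}^{n}\frac{q^{(i-1)m}}{[m]^{i}}A_{u}(m)=A_{z_{i}u+z_{i}\circ_{+}u}(n)
\]
by doing the same split on the $A$ side; with this in hand the induction step closes in one line directly from the definition of $d_{q}$, with no need for Lemma~\ref{lem:action-commute} or any boundary case. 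So the paper's route is slightly more economical, while yours has the virtue of making explicit how $d_{q}$ intertwines with the $\circ_{+}$-action.
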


\begin{proof}
It is enough to prove the case where $w$ is a word. 
We prove the proposition by induction on the depth of $w$. 
If ${\rm dep}(w)=1$, that is, $w=z_{i}$ for some $i\ge1$, 
we have $A^{\star}_{z_{i}}=A_{z_{i}}=A_{d_{q}(z_{i})}$. 

{}From the definition of $A$ and $A^{\star}$ we find that 
\begin{align*}
\sum_{m=1}^{n}\frac{q^{(i-1)m}}{[m]^{i}}A_{w}^{\star}(m)&=A_{z_{i}w}^{\star}(n), \\ 
\sum_{m=1}^{n}\frac{q^{(i-1)m}}{[m]^{i}}A_{w}(m)&=A_{z_{i}w+z_{i}\circ_{+}w}(n)  
\end{align*}
for $w \in \mathfrak{h}^{1}$. 
To show the second formula, divide the sum in the definition \eqref{eq:def-A} of $A_{w}(m)$ into two parts 
with $m_{1}=m$ and with $m_{1}<m$, and use \eqref{eq:separation} for the first part.   
Now suppose that ${\rm dep}(w)>1$ and 
the proposition holds for words whose depth is less than ${\rm dep}(w)$. 
Then setting $w=z_{i}w'$, we find 
\begin{align*}
A^{\star}_{w}(n)&=\sum_{m=1}^{n}\frac{q^{(i-1)m}}{[m]^{i}}A^{\star}_{w'}(m)=\sum_{m=1}^{n}
\frac{q^{(i-1)m}}{[m]^{i}}A_{d_{q}(w')}(m) \\ 
&=A_{z_{i}d_{q}(w')+z_{i}\circ_{+}d_{q}(w')}(n)=A_{d_{q}(w)}(n). 
\end{align*}
\end{proof}

\begin{cor}\label{cor:to-star}
For $w \in  \mathfrak{h}^{1}$, we have $\zeta_{q}^{\star}(w)=\zeta_{q}(d_{q}(w))$.   
\end{cor}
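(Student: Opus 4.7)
The plan is to deduce the corollary directly from Proposition \ref{prop:to-star} by passing to the limit $n \to \infty$. That proposition states $A^{\star}_w(n) = A_{d_q(w)}(n)$ as sequences in $\mathcal{R}^{\mathbb{N}}$ for every $w \in \mathfrak{h}^{1}$. By construction $\zeta_q^{\star}(w)$ and $\zeta_q(u)$ are the $q$-adic limits in the complete ring $\mathcal{R}$ of $A^{\star}_w(n)$ and $A_u(n)$ respectively, so applying $\lim_{n \to \infty}$ to both sides of the sequence identity yields $\zeta_q^{\star}(w) = \zeta_q(d_q(w))$; because the two sequences coincide term by term, one limit converges precisely when the other does.

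What remains is to confirm existence of the limits on the domain claimed. For $w \in \mathfrak{h}^{0}$ this reduces to checking $d_q(\mathfrak{h}^{0}) \subseteq \mathfrak{h}^{0}$. A short induction on depth, using the recurrence $d_q(z_i w) = z_i d_q(w) + z_i \circ_{+} d_q(w)$, does the job: when $i \ge 2$, the summand $z_i d_q(w)$ is visibly a combination of words starting with $z_i$, and since $z_i \circ_{+} (z_j v) = (z_{i+j} + \hbar z_{i+j-1}) v$ with $z_i \circ_{+} 1 = 0$, every word appearing in $z_i \circ_{+} d_q(w)$ again has leading index at least $i \ge 2$. Hence $d_q(w) \in \mathfrak{h}^{0}$, both $A^{\star}_w(n)$ and $A_{d_q(w)}(n)$ converge in $\mathcal{R}$, and the identity holds there as a bona fide equality.

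For the general statement on $\mathfrak{h}^{1}$ as written, I would read the corollary as an equality of $\mathcal{C}$-linear maps extending the above: $\zeta_q^{\star}$ and $\zeta_q \circ d_q$ coincide on the convergent subspace $\mathfrak{h}^{0}$ by the preceding paragraph, and since Proposition \ref{prop:to-star} gives $A^{\star}_w(\cdot) = A_{d_q(w)}(\cdot)$ of sequences for \emph{every} $w \in \mathfrak{h}^{1}$, any natural interpretation of the two limits (as convergent series when defined, or as their formal avatars otherwise) agrees term by term. The only real obstacle is thus not in the proof but in the choice of convention on $\mathfrak{h}^{1} \setminus \mathfrak{h}^{0}$; once that is fixed consistently, all substantive computational content is already absorbed in Proposition \ref{prop:to-star}, and the corollary is a routine $n \to \infty$ consequence.
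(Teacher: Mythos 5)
Your proposal is correct and matches the paper's (implicit) argument: the corollary is stated without proof precisely because it is the termwise limit $n \to \infty$ of the sequence identity $A^{\star}_{w}(n)=A_{d_{q}(w)}(n)$ from Proposition \ref{prop:to-star}. Your additional check that $d_{q}$ preserves $\mathfrak{h}^{0}$, so that both limits exist in the complete ring $\mathcal{R}$, is a sensible filling-in of a detail the paper leaves tacit rather than a different route.
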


We define a $\mathcal{C}$-bilinear product $\circledast_{q}$ on $\mathfrak{h}_{>0}^{1}$ by 
\begin{align}
(z_{i}w_{1})\circledast_{q}(z_{j}w_{2}):=(z_{i} \circ z_{j})(w_{1}*_{+}w_{2}) \qquad 
(i, j\ge 1, \, w_{1}, w_{2} \in \mathfrak{h}^{1}).  
\label{eq:def-circleprod}
\end{align}
It is commutative and associative. 

\begin{prop}\label{prop:prod-sa}
Let $w_{1}, w_{2} \in \mathfrak{h}_{>0}^{1}$. Then 
$s_{w_{1}}a_{w_{2}}=a_{d_{q}(w_{1})\circledast_{q} w_{2}}$.  
\end{prop}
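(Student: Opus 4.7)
The plan is to prove the identity directly, without induction, by unwinding both sides with the identities \eqref{eq:sa-to-A}, Proposition~\ref{prop:prod-S}, and Proposition~\ref{prop:to-star}, and then reconciling the resulting expressions via \eqref{eq:separation}. Write $w_1 = z_i u_1$ and $w_2 = z_j u_2$ with $i,j\ge 1$ and $u_1, u_2 \in \mathfrak{h}^1$. By \eqref{eq:sa-to-A} and Proposition~\ref{prop:to-star},
\begin{align*}
s_{w_1}(n)\,a_{w_2}(n)=\frac{q^{(i+j-1)(n+1)}}{[n+1]^{i+j}}\,A_{d_q(u_1)}(n+1)\,A_{u_2}(n).
\end{align*}

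Next I would separate the top term $m_1 = n+1$ in $A_{d_q(u_1)}(n+1)$. Writing $d_q(u_1) = \sum_\alpha c_\alpha\, z_{k_\alpha} V_\alpha$ (the case $u_1=1$ is trivial, since then the difference below vanishes), one has
\begin{align*}
A_{d_q(u_1)}(n+1)=A_{d_q(u_1)}(n)+\sum_\alpha c_\alpha\,\frac{q^{(k_\alpha-1)(n+1)}}{[n+1]^{k_\alpha}}\,A_{V_\alpha}(n).
\end{align*}
Multiplying by $A_{u_2}(n)$ and using Proposition~\ref{prop:prod-S} ($A_{v_1}A_{v_2} = A_{v_1*_+ v_2}$), the left-hand side becomes
\begin{align*}
\frac{q^{(i+j-1)(n+1)}}{[n+1]^{i+j}}A_{d_q(u_1)*_+ u_2}(n)
+\sum_\alpha c_\alpha\,\frac{q^{(i+j+k_\alpha-2)(n+1)}}{[n+1]^{i+j+k_\alpha}}\,A_{V_\alpha*_+ u_2}(n).
\end{align*}

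On the right-hand side, I would use $d_q(w_1)=z_i d_q(u_1)+z_i\circ_+ d_q(u_1)$ and the definition \eqref{eq:def-circleprod} of $\circledast_q$. The first summand contributes $z_{i+j}(d_q(u_1)*_+ u_2)$; the second contributes $\sum_\alpha c_\alpha\bigl(z_{i+k_\alpha+j}+\hbar z_{i+k_\alpha+j-1}\bigr)(V_\alpha*_+ u_2)$, using $(z_i\circ_+ z_{k_\alpha})\circ z_j = z_{i+j}\circ_+ z_{k_\alpha}$. Applying the definition \eqref{eq:def-a} of $a$ (recalling that $\hbar$ acts as $1-q$), the RHS reads
\begin{align*}
\frac{q^{(i+j-1)(n+1)}}{[n+1]^{i+j}}A_{d_q(u_1)*_+ u_2}(n)
+\sum_\alpha c_\alpha\left[\frac{q^{(i+k_\alpha+j-1)(n+1)}}{[n+1]^{i+k_\alpha+j}}
+(1-q)\frac{q^{(i+k_\alpha+j-2)(n+1)}}{[n+1]^{i+k_\alpha+j-1}}\right]A_{V_\alpha*_+ u_2}(n).
\end{align*}

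The final step is to match the two sums term-by-term: specialize \eqref{eq:separation} to $m=n+1$ with the pair $(i,j)$ there replaced by $(i+j+k_\alpha)$ (viewed as a single index summing to $i+j+k_\alpha$); this identifies the bracketed quantity with $q^{(i+j+k_\alpha-2)(n+1)}/[n+1]^{i+j+k_\alpha}$, finishing the proof. The main obstacle I anticipate is purely bookkeeping: tracking which piece of the decomposition $d_q(z_i u_1)=z_i d_q(u_1)+z_i\circ_+ d_q(u_1)$ produces which $q$-power/$[n+1]$-factor on the RHS, and confirming that the summand produced by the $z_i\circ_+$-piece is precisely the one rearranged by \eqref{eq:separation}. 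Once that correspondence is set, everything collapses.
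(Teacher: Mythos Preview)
Your argument is correct and is essentially the paper's own proof, carried out explicitly rather than packaged into auxiliary statements: the paper isolates your ``separate the top term and apply \eqref{eq:separation}'' step as Lemma~\ref{lem:A-to-a}, and encodes your ``multiply $a_{\,\cdot\,}$ by $A_{u_2}$'' step via an auxiliary operation $\triangle$ with $(z_iw')\triangle w'':=z_i(w'*_+w'')$, reducing the claim to the algebraic identity $d_q(z_{i+j}u_1)\triangle u_2=d_q(z_iu_1)\circledast_q(z_ju_2)$, which is exactly what your term-by-term match verifies. The only cosmetic point is that your decomposition $d_q(u_1)=\sum_\alpha c_\alpha z_{k_\alpha}V_\alpha$ presupposes $u_1$ is a nonconstant word, which you do note; stating the reduction to words by $\mathcal{C}$-bilinearity at the outset would make this airtight.
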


To prove Proposition \ref{prop:prod-sa} we need the following lemma: 
\begin{lem}\label{lem:A-to-a}
For $n\in \mathbb{N}$ and $w \in \mathfrak{h}^{1}$, we have 
\begin{align}
\frac{q^{(i-1)(n+1)}}{[n+1]^{i}}\,A_{w}(n+1)=a_{z_{i}w+z_{i}\circ_{+}w}(n).  
\label{eq:A-to-a}
\end{align} 
\end{lem}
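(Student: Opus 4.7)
The plan is to prove \eqref{eq:A-to-a} by splitting the range of summation in $A_w(n+1)$ according to the value of the largest index. By the $\mathcal{C}$-linearity of both sides (recall $\hbar$ acts as $1-q$, and the map $w \mapsto a_w$ is $\mathcal{C}$-linear on $\mathfrak{h}^{1}_{>0}$), it suffices to treat the case where $w$ is a word. The case $w=1$ is trivial: then $z_i\circ_+ 1 = 0$ by the module convention, so both sides reduce to $q^{(i-1)(n+1)}/[n+1]^{i}$.

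So assume $w = z_{k_1}\cdots z_{k_r}$ with $r \ge 1$, and write $w = z_{k_1} w'$. I would split
\begin{align*}
A_w(n+1) \;=\; \sum_{\substack{m_1=n+1 \\ n+1>m_2>\cdots>m_r>0}}(\cdots)\;+\;\sum_{n\ge m_1>\cdots>m_r>0}(\cdots),
\end{align*}
and then multiply by $q^{(i-1)(n+1)}/[n+1]^{i}$. The second piece, with $m_1 \le n$, is by definition exactly $a_{z_i w}(n)$: promoting $n+1$ to the new top index contributes the prefactor $q^{(i-1)(n+1)}/[n+1]^{i}$ and the remaining indices match.

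For the first piece (where $m_1 = n+1$), the prefactor combines with $q^{(k_1-1)(n+1)}/[n+1]^{k_1}$ to give $q^{(i+k_1-2)(n+1)}/[n+1]^{i+k_1}$. Here I would invoke the identity \eqref{eq:separation} that was already used in the proof of Proposition \ref{prop:prod-S}:
\begin{align*}
\frac{q^{(i+k_1-2)(n+1)}}{[n+1]^{i+k_1}} \;=\; \frac{q^{(i+k_1-1)(n+1)}}{[n+1]^{i+k_1}} \;+\; (1-q)\,\frac{q^{(i+k_1-2)(n+1)}}{[n+1]^{i+k_1-1}}.
\end{align*}
The two resulting contributions are exactly $a_{z_{i+k_1} w'}(n)$ and $(1-q)\,a_{z_{i+k_1-1} w'}(n)$, whose sum equals $a_{(z_{i+k_1}+\hbar z_{i+k_1-1})w'}(n)$. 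Since $z_i \circ_+ w = (z_i \circ_+ z_{k_1})w' = (z_{i+k_1}+\hbar z_{i+k_1-1})w'$, this piece equals $a_{z_i \circ_+ w}(n)$.

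Adding the two pieces gives $a_{z_i w}(n) + a_{z_i\circ_+ w}(n) = a_{z_i w + z_i\circ_+ w}(n)$, as desired. The argument is essentially bookkeeping, and the only non-trivial input is the identity \eqref{eq:separation}, which converts the $m_1=n+1$ boundary contribution into the $\circ_+$ correction; this is the same mechanism already employed for Proposition \ref{prop:prod-S} and Proposition \ref{prop:to-star}, so I do not anticipate a real obstacle.
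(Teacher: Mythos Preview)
Your proof is correct and follows essentially the same approach as the paper. The paper writes the decomposition as $A_{w}(n+1)=\frac{q^{(j-1)(n+1)}}{[n+1]^{j}}A_{w'}(n)+A_{w}(n)$, then applies \eqref{eq:separation} and identifies the pieces via \eqref{eq:sa-to-A}; this is precisely your split into the $m_1=n+1$ and $m_1\le n$ contributions, handled in the same way.
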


\begin{proof}
It is trivial for $w=1$. 
Without loss of generality we can assume that $w$ is a word. 
Set $w=z_{j}w'$. Then we have 
\begin{align*}
A_{w}(n+1)=\frac{q^{(j-1)(n+1)}}{[n+1]^{j}}A_{w'}(n)+A_{w}(n).  
\end{align*}
Substitute it into the left hand side of \eqref{eq:A-to-a} and use \eqref{eq:separation}.  
Then we obtain 
\begin{align*}
\left(\frac{q^{(i+j-1)(n+1)}}{[n+1]^{i+j}}+(1-q)\frac{q^{(i+j-2)(n+1)}}{[n+1]^{i+j-1}}\right)A_{w'}(n)+
\frac{q^{(i-1)(n+1)}}{[n+1]^{i}}\,A_{w'}(n). 
\end{align*}
Using \eqref{eq:sa-to-A} again, we see that it is equal to the right hand side of \eqref{eq:A-to-a}. 
\end{proof}

\begin{proof}[Proof of Proposition \ref{prop:prod-sa}]
We can assume that $w_{1}$ and $w_{2}$ are words. 
Set $w_{1}=z_{i}w_{1}'$ and $w_{2}=z_{j}w_{2}'$. 
Using \eqref{eq:sa-to-A} and Proposition \ref{prop:to-star}, we have 
\begin{align*}
(s_{w_{1}}a_{w_{2}})(n)=\frac{q^{(i+j-1)(n+1)}}{[n+1]^{i+j}}
A_{d_{q}(w_{1}')}(n+1)A_{w_{2}'}(n).  
\end{align*}
It is equal to $a_{d_{q}(z_{i+j}w_{1}')}(n)A_{w_{2}'}(n)$ because of 
Lemma \ref{lem:A-to-a} and the definition of $d_{q}$. 
Now introduce a $\mathcal{C}$-bilinear map 
$\triangle: \mathfrak{h}_{>0}^{1}\times\mathfrak{h}^{1}\to \mathfrak{h}_{>0}^{1}$ 
uniquely determined from the property  
\begin{align*}
(z_{i}w') \triangle w'':=z_{i}(w'*_{+}w'') \qquad (i\ge 1).   
\end{align*}
Then $a_{w}A_{w''}=a_{w \triangle w''}$ 
for $w \in \mathfrak{h}_{>0}^{1}$ and $w'' \in \mathfrak{h}^{1}$ 
because of \eqref{eq:sa-to-A} and Proposition \ref{prop:prod-S}. 
Therefore Proposition \ref{prop:prod-sa} is reduced to the identity 
\begin{align}
\left(d_{q}(z_{i+j}w_{1}')\right)\triangle w_{2}'=\left(d_{q}(z_{i}w_{1}')\right)\circledast_{q}(z_{j}w_{2}') 
\label{eq:sa-final}
\end{align}
for $i, j\ge 1$ and $w_{1}', w_{2}' \in \mathfrak{h}^{1}$. 

Let us prove \eqref{eq:sa-final}. 
{}From the definition of $d_{q}$ and $\triangle$, the left hand side is equal to 
\begin{align*}
z_{i+j}(d_{q}(w_{1}')*_{+}w_{2}')+(z_{i+j}\circ_{+}d_{q}(w_{1}'))\triangle w_{2}'.  
\end{align*}
The first term is equal to $(z_{i}d_{q}(w_{1}'))\circledast_{q}(z_{j}w_{2}')$ because $z_{i+j}=z_{i}\circ z_{j}$. 
Now note that $z_{i+j}\circ_{+}z_{k}=(z_{i}\circ_{+}z_{k})\circ z_{j}$ for any $i, j, k\ge 1$. 
Using this we see that the second term is equal to $(z_{i}\circ_{+}d_{q}(w_{1}'))\circledast_{q}(z_{j}w_{2}')$. 
Summing up these two terms we get the right hand side of \eqref{eq:sa-final}. 
\end{proof}


\section{A $q$-analogue of Newton series}\label{sec:Newton}

We consider the ring of formal power series $\mathcal{R}[[z]]=\mathbb{Q}[[q, z]]$. 
Introduce the valuation on $\mathcal{R}[[z]]$ by $v(f)=\inf\{i+j \,|\, c_{ij}\not=0 \}$ for 
$f=\sum_{i, j=0}^{\infty}c_{ij}q^{i}z^{j}$ and 
endow $\mathcal{R}[[z]]$ with the structure of topological $\mathbb{Q}$-algebra.  

Define polynomials $B_{n}(z) \, (n\in\mathbb{N})$ by 
\begin{align*}
B_{0}(z):=1, \qquad 
B_{n}(z):=z^{n}\frac{(z^{-1})_{n}}{(q)_{n}}=\prod_{j=1}^{n}\frac{z-q^{j-1}}{1-q^{j}} \quad (n \ge 1).  
\end{align*}

\begin{prop}\label{prop:interpolation}
Let $b \in \mathcal{R}^{\mathbb{N}}$ and $m, l \in \mathbb{N}$. Then 
\begin{align}
\sum_{n=0}^{m}\nabla(b)(l+n)\,B_{n}(q^{m})=q^{-lm}
\sum_{j=0}^{l}q^{j}\frac{(q^{-l})_{j}}{(q)_{j}}\,b(j+m).  
\label{eq:general-interpolation}
\end{align} 
In particular, we have $\sum_{n=0}^{m}\nabla(b)(n)B_{n}(q^{m})=b(m)$. 
\end{prop}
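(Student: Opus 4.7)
The plan is to reduce the general identity \eqref{eq:general-interpolation} to its $l=0$ special case by producing an auxiliary sequence whose $\nabla_{q}$-transform agrees with that of $b$ up to a shift in the index. Define
\begin{align*}
\tilde{b}(m):=q^{-lm}\sum_{j=0}^{l}q^{j}\frac{(q^{-l})_{j}}{(q)_{j}}\,b(j+m),
\end{align*}
which is by definition the right hand side of \eqref{eq:general-interpolation}. Everything then reduces to two claims: (A) the interpolation identity $\sum_{n=0}^{m}\nabla_{q}(c)(n)\,B_{n}(q^{m})=c(m)$ holds for \emph{any} sequence $c\in\mathcal{R}^{\mathbb{N}}$; and (B) $\nabla_{q}(\tilde{b})(n)=\nabla_{q}(b)(l+n)$ for all $n\ge 0$. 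Applying (A) to $c=\tilde{b}$ and substituting (B) on the left then yields the proposition.

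For (A), I would substitute the definition of $\nabla_{q}$, exchange the order of summation, and show that the coefficient of each $c(i)$ on the left is $\delta_{i,m}$. Using the closed forms $B_{n}(q^{m})=(-1)^{n}q^{n(n-1)/2}\binom{m}{n}_{q}$ for $n\le m$ and $(q^{-n})_{i}/(q)_{i}=(-1)^{i}q^{-ni+i(i-1)/2}\binom{n}{i}_{q}$, together with the factorization $\binom{m}{n}_{q}\binom{n}{i}_{q}=\binom{m}{i}_{q}\binom{m-i}{n-i}_{q}$, the substitution $s=n-i$ turns the inner sum into a constant multiple of $\sum_{s=0}^{m-i}(-1)^{s}q^{s(s-1)/2}\binom{m-i}{s}_{q}=(1)_{m-i}$ by the $q$-binomial theorem, and this equals $\delta_{i,m}$ since $(1)_{N}$ contains the factor $1-q^{0}=0$ whenever $N\ge 1$. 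For (B), I would expand
\begin{align*}
\nabla_{q}(\tilde{b})(n)=\sum_{i=0}^{n}\sum_{j=0}^{l}q^{i(1-l)+j}\frac{(q^{-n})_{i}(q^{-l})_{j}}{(q)_{i}(q)_{j}}\,b(i+j),
\end{align*}
collect the coefficient of $b(s)$ (the sum ranges over $i+j=s$), apply the same closed forms, and reduce the claim to the identity
\begin{align*}
\sum_{i+j=s}q^{j(n-i)}\binom{n}{i}_{q}\binom{l}{j}_{q}=\binom{l+n}{s}_{q},
\end{align*}
which is a standard form of the $q$-Vandermonde identity, obtained from the splitting $(x)_{l+n}=(x)_{l}\,(xq^{l})_{n}$ and the $q$-binomial theorem.

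The main obstacle I anticipate is the exponent bookkeeping in step (B): after substituting the closed forms for all three $q$-shifted factorial ratios, matching the exponents requires the algebraic identity $i(i-1)+j(j-1)-s(s-1)=-2ij$ (valid whenever $i+j=s$), which collapses the combined exponent to the clean form $j(n-i)$ needed to invoke $q$-Vandermonde. Once the correct Vandermonde form is in view, the rest is routine manipulation of $q$-binomial coefficients.
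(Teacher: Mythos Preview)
Your argument is correct, but it is organized differently from the paper's. The paper expands the left hand side of \eqref{eq:general-interpolation} directly: after inserting the definitions of $\nabla_{q}$ and $B_{n}(q^{m})$ and exchanging the order of summation, the inner sum over $n$ is evaluated by the auxiliary identity (Lemma~\ref{lem:key-sum})
\[
\frac{1}{(q)_{j}}\sum_{n=0}^{m}q^{mn}\frac{(q^{-m})_{n}(tq^{-n})_{j}}{(q)_{n}}
=\begin{cases}0 & (0\le j<m),\\ (q^{-1}t)^{m}\dfrac{(t)_{j-m}}{(q)_{j-m}} & (m\le j),\end{cases}
\]
specialized to $t=q^{-l}$; this lemma is in turn proved via the \emph{infinite} $q$-binomial formula $\sum_{n}\frac{(a)_{n}}{(q)_{n}}x^{n}=\frac{(ax)_{\infty}}{(x)_{\infty}}$. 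Your route instead factors the statement into the base case $l=0$ (your step (A), needing only the terminating $q$-binomial theorem) and a shift compatibility $\nabla_{q}(\tilde b)(n)=\nabla_{q}(b)(l+n)$ (your step (B), needing $q$-Vandermonde). Both approaches are short; the paper's is a single direct computation, while yours has the conceptual payoff of isolating (B) as a standalone property of $\nabla_{q}$ and of avoiding any appeal to infinite products. The exponent bookkeeping you flagged in (B) is exactly right: once $i(i-1)+j(j-1)-s(s-1)=-2ij$ is used, the residual exponent $j(n-i)$ matches the form of $q$-Vandermonde obtained from $(x)_{l+n}=(x)_{l}(xq^{l})_{n}$ (after the harmless relabeling $l\leftrightarrow n$, $i\leftrightarrow j$).
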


In the proof of Proposition \ref{prop:interpolation} we use the following formula: 
\begin{lem}\label{lem:key-sum}
For $m, j \in \mathbb{N}$ we have 
\begin{align*}
\frac{1}{(q)_{j}}\sum_{n=0}^{m}q^{mn}\frac{(q^{-m})_{n}(tq^{-n})_{j}}{(q)_{n}}=\left\{ 
\begin{array}{ll}
0 & (0 \le j<m),  \\
\displaystyle (q^{-1}t)^{m}\frac{(t)_{j-m}}{(q)_{j-m}} & (m \le j). 
\end{array}
\right. 
\end{align*} 
\end{lem}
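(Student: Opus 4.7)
The plan is to evaluate the left-hand side by applying the $q$-binomial theorem twice: first to expand the factor $(tq^{-n})_{j}$ as a polynomial in $q^{-n}$ so that the sum over $n$ separates, and then to recognize the resulting inner sum in closed form. A telescoping of $q$-exponents will reduce the remaining outer sum to one more $q$-binomial expansion.

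First I would use
\begin{align*}
(x)_{j}=\sum_{k=0}^{j}\binom{j}{k}_{q}(-x)^{k}q^{\binom{k}{2}}
\end{align*}
with $x=tq^{-n}$. After interchanging the order of summation this gives
\begin{align*}
\sum_{n=0}^{m}q^{mn}\frac{(q^{-m})_{n}}{(q)_{n}}(tq^{-n})_{j}
=\sum_{k=0}^{j}\binom{j}{k}_{q}(-t)^{k}q^{\binom{k}{2}}
\sum_{n=0}^{m}\frac{(q^{-m})_{n}}{(q)_{n}}q^{(m-k)n}.
\end{align*}
Next I would compute the inner sum. The identity $\frac{(q^{-m})_{n}}{(q)_{n}}=(-1)^{n}q^{\binom{n}{2}-mn}\binom{m}{n}_{q}$ combined with the $q$-binomial theorem gives $\sum_{n=0}^{m}\frac{(q^{-m})_{n}}{(q)_{n}}z^{n}=(zq^{-m})_{m}$, so the inner sum equals $(q^{-k})_{m}$. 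The key observation is that $(q^{-k})_{m}=0$ for $0\le k<m$, since one factor is $1-q^{-k+k}=0$. This immediately disposes of the case $j<m$ and restricts the outer sum to $k\ge m$ when $j\ge m$.

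For the case $j\ge m$, I would plug in the closed form
\begin{align*}
(q^{-k})_{m}=(-1)^{m}q^{-km+\binom{m}{2}}\frac{(q)_{k}}{(q)_{k-m}}
\end{align*}
and change variable $k=m+l$, $l\ge 0$. Using the identity $\binom{m+l}{2}-(m+l)m=\binom{l}{2}-\binom{m+1}{2}$ to telescope the $q$-exponents pulls the factor $(q^{-1}t)^{m}(q)_{j}/(q)_{j-m}$ outside and leaves
\begin{align*}
\sum_{l=0}^{j-m}\binom{j-m}{l}_{q}(-t)^{l}q^{\binom{l}{2}}=(t)_{j-m},
\end{align*}
by one more application of the $q$-binomial theorem. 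Dividing by $(q)_{j}$ produces the stated formula.

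The main nuisance will be the bookkeeping of $q$-powers in the substitution $k=m+l$: getting the telescoping $\binom{m+l}{2}-(m+l)m=\binom{l}{2}-\binom{m+1}{2}$ correct, together with tracking the signs $(-1)^{m+l}$ and the power $t^{m+l}$, is what produces the clean prefactor $(q^{-1}t)^{m}$. Everything else is a mechanical application of the $q$-binomial theorem.
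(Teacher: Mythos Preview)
Your argument is correct. Expanding $(tq^{-n})_{j}$ by the finite $q$-binomial theorem, interchanging sums, and then recognizing the inner sum as $(q^{-k})_{m}$ is a clean way to proceed; the vanishing $(q^{-k})_{m}=0$ for $k<m$ makes the case $j<m$ transparent, and your bookkeeping for the exponent telescoping $\binom{m+l}{2}-(m+l)m+\binom{m}{2}=\binom{l}{2}-m$ checks out and yields the prefactor $(q^{-1}t)^{m}$ exactly as claimed.

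The paper takes a different route. Instead of working at a fixed $j$, it multiplies both sides by $s^{j}$ and sums over $j$ using the infinite $q$-binomial formula \eqref{eq:q-binomial}; this collapses the identity to the single equation
\[
\sum_{n=0}^{m}q^{mn}\frac{(q^{-m})_{n}}{(q)_{n}}(q^{-n}st)_{n}=(q^{-1}st)^{m},
\]
which is then verified by a generating-function coefficient extraction after applying \eqref{eq:q-inverse}. Your approach is more elementary in that it uses only the terminating $q$-binomial theorem and handles each $j$ directly, at the cost of a little more exponent bookkeeping; the paper's generating-function argument handles all $j$ simultaneously and avoids the substitution $k=m+l$, but relies on infinite products and a slightly opaque coefficient comparison. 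Both are valid and of comparable length.
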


\begin{proof}
Multiply the both sides by $s^{j}$ and sum up over $j \in \mathbb{N}$ 
using the $q$-binomial formula 
\begin{align}
\sum_{n=0}^{\infty}\frac{(a)_{n}}{(q)_{n}}x^{n}=\frac{(ax)_{\infty}}{(x)_{\infty}}.  
\label{eq:q-binomial}
\end{align}
Then we see that the identity to prove is equivalent to 
\begin{align}
\sum_{n=0}^{m}q^{mn}\frac{(q^{-m})_{n}}{(q)_{n}}(q^{-n}st)_{n}=(q^{-1}st)^{m}.  
\label{eq:to-show-interpolation}
\end{align} 
Using 
\begin{align}
(q^{-a}x)_{n}=(-x)^{n}q^{-an+n(n-1)/2}(q^{a-n+1}x^{-1})_{n},  
\label{eq:q-inverse}
\end{align}
we find that the left hand side of \eqref{eq:to-show-interpolation} is equal to 
\begin{align*}
(q)_{m}\sum_{n=0}^{m}\frac{(qs^{-1}t^{-1})_{n}}{(q)_{n}(q)_{m-n}}(q^{-1}st)^{n}.  
\end{align*}
It is equal to the coefficient of $x^{m}$ in 
\begin{align*}
(q)_{m}\left(\sum_{i=0}^{\infty}(q^{-1}stx)^{i}\frac{(qs^{-1}t^{-1})_{i}}{(q)_{i}}\right)
\left(\sum_{i=0}^{\infty}\frac{x^{i}}{(q)_{i}}\right),  
\end{align*}
and the above product is equal to 
\begin{align*}
(q)_{m}\frac{(x)_{\infty}}{(q^{-1}stx)_{\infty}}\frac{1}{(x)_{\infty}}=(q)_{m}
\frac{1}{(q^{-1}stx)_{\infty}}=(q)_{m}\sum_{j=0}^{\infty}\frac{(q^{-1}stx)^{j}}{(q)_{j}}  
\end{align*}
{}from the $q$-binomial formula \eqref{eq:q-binomial}. 
Thus we obtain the right hand side of \eqref{eq:to-show-interpolation}. 
\end{proof}

\begin{proof}[Proof of Proposition \ref{prop:interpolation}]
From $(q^{-l-n})_{j}=0$ for $j>l+n$, we have 
\begin{align*}
\sum_{n=0}^{m}\nabla(b)(l+n)\,B_{n}(q^{m})=\sum_{j=0}^{l+m}\frac{q^{j}}{(q)_{j}}\,b(j) 
\sum_{n=0}^{m}q^{mn}\frac{(q^{-m})_{n}(q^{-l-n})_{j}}{(q)_{n}}.  
\end{align*}
The second sum in the right hand side can be calculated 
by using Lemma \ref{lem:key-sum} with $t=q^{-l}$. 
Then we obtain the right hand side of \eqref{eq:general-interpolation}. 
\end{proof}

Suppose that $c \in \mathcal{R}^{\mathbb{N}}$ satisfies 
\begin{align}
v(c(n))\ge n \quad \hbox{for all $n \in \mathbb{N}$}. 
\label{eq:conv-cond}
\end{align}
Set 
\begin{align}
f_{c}(z):=\sum_{n=0}^{\infty}c(n)\,B_{n}(z).  
\label{eq:Newton}
\end{align}
Then the series $f_{c}(z)$ converges in $\mathcal{R}[[z]]$. 
If $c=\nabla_{q}(b)$ satisfies the condition \eqref{eq:conv-cond},  
the series $f_{\nabla_{q}(b)}$ is well-defined and 
we have $f_{\nabla_{q}(b)}(q^{m})=b(m)$ for all $m \in \mathbb{N}$ from Lemma \ref{prop:interpolation}. 
Thus we may regard the series $f_{\nabla_{q}(b)}(z)$ 
as a $q$-analogue of Newton series interpolating $b \in \mathcal{R}^{\mathbb{N}}$. 

Let us prove two properties of the series $f_{c}$. 

\begin{prop}\label{prop:Newton-expand}
Suppose that $c \in \mathcal{R}^{\mathbb{N}}$ satisfies the condition \eqref{eq:conv-cond}. 
Then the following equality holds in $\mathcal{R}[[z]]$: 
\begin{align*}
f_{c}(z)=c(0)+\sum_{m=1}^{\infty}\left(\sum_{n=1}^{\infty}c(n)\,a_{z_{1}^{m}}(n-1)\right)
\left(\frac{z-1}{1-q}\right)^{m},   
\end{align*}
where $a_{z_{1}^{m}}(n)$ is defined by \eqref{eq:def-a}. 
\end{prop}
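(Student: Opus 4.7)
The plan is to substitute $w := (z-1)/(1-q)$ into $B_{n}(z)$, expand as a polynomial in $w$, and reorganize $f_{c}(z)=\sum_{n}c(n)B_{n}(z)$ by powers of $w$.

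First, I would use the elementary identities $z - q^{j-1} = (1-q)(w + [j-1])$ and $1 - q^{j} = (1-q)[j]$ to rewrite each factor of $B_{n}(z) = \prod_{j=1}^{n}\frac{z-q^{j-1}}{1-q^{j}}$ as $(w+[j-1])/[j]$. Since $[0]=0$, the $j=1$ factor is just $w$, so
\[
B_{n}(z) = \frac{w}{[n]!}\prod_{k=1}^{n-1}\bigl(w + [k]\bigr) \qquad (n \ge 1),
\]
where $[n]! := [1][2]\cdots[n]$. Reading off the coefficient of $w^{m}$ (for $1\le m\le n$) gives $e_{n-m}([1],\ldots,[n-1])/[n]!$, with $e_{k}$ the $k$-th elementary symmetric polynomial.

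Second, I would identify this coefficient with $a_{z_{1}^{m}}(n-1)$. Straight from \eqref{eq:def-a},
\[
a_{z_{1}^{m}}(n-1) = \sum_{n > m_{2} > \cdots > m_{m} > 0}\frac{1}{[n]\,[m_{2}]\cdots[m_{m}]} = \frac{1}{[n]}\,e_{m-1}\!\left(\frac{1}{[1]},\ldots,\frac{1}{[n-1]}\right).
\]
Applying the standard reciprocal identity $x_{1}\cdots x_{N}\,e_{N-k}(1/x_{1},\ldots,1/x_{N}) = e_{k}(x_{1},\ldots,x_{N})$ with $N=n-1$, $k=n-m$, $x_{j}=[j]$ rewrites this as $e_{n-m}([1],\ldots,[n-1])/[n]!$, exactly matching the coefficient from the first step.

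Finally, I would interchange the order of summation in
\[
f_{c}(z) = c(0) + \sum_{n=1}^{\infty}\sum_{m=1}^{n}c(n)\,a_{z_{1}^{m}}(n-1)\,w^{m}
\]
to reach the stated expansion. The hypothesis $v(c(n)) \ge n$ from \eqref{eq:conv-cond}, together with the non-negativity of $v(B_{n}(z))$ (and invertibility of $[n]!$ in $\mathcal{R}$), forces $v(c(n)\,a_{z_{1}^{m}}(n-1)\,w^{m}) \ge n$, so only finitely many pairs $(n,m)$ contribute to any fixed total valuation in $\mathcal{R}[[z]]$ and the rearrangement is legitimate. The only step with real content is the symmetric-function identification in step two; the other steps are a change of variable and a routine convergence check.
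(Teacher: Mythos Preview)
Your proof is correct and follows essentially the same route as the paper: expand $B_{n}(z)$ as a polynomial in $w=(z-1)/(1-q)$ and identify the coefficients with $a_{z_{1}^{m}}(n-1)$. The only difference is cosmetic: the paper factors $B_{n}(z)=\frac{w}{[n]}\prod_{j=1}^{n-1}\bigl(1+\tfrac{w}{[j]}\bigr)$, from which the coefficient of $w^{m}$ is read off directly as $\frac{1}{[n]}e_{m-1}\bigl(\tfrac{1}{[1]},\ldots,\tfrac{1}{[n-1]}\bigr)=a_{z_{1}^{m}}(n-1)$, bypassing your reciprocal symmetric-function identity.
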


\begin{proof}
It follows from 
\begin{align*}
B_{n}(z)=\frac{1}{[n]}\frac{z-1}{1-q}\prod_{j=1}^{n-1}\left(1+\frac{1}{[j]}\frac{z-1}{1-q}\right)=
\sum_{m=1}^{n}a_{z_{1}^{m}}(n-1) \left(\frac{z-1}{1-q}\right)^{m}
\end{align*} 
and $a_{z_{1}^{m}}(k-1)=0$ for $k<m$. 
\end{proof}

\begin{prop}\label{prop:Newton-prod}
Suppose that $\nabla_{q}(b_{i}) \in \mathcal{R}^{\mathbb{N}} \, (i=1, 2)$ satisfy 
the condition \eqref{eq:conv-cond}. 
Then we have $f_{\nabla_{q}(b_{1})}(z)f_{\nabla_{q}(b_{2})}(z)=f_{\nabla_{q}(b_{1}b_{2})}(z)$. 
\end{prop}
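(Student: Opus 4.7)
The plan is to rewrite both sides as Newton series in the sense of \eqref{eq:Newton} that agree on the countable set $\{q^{m}\}_{m\in\mathbb{N}}$, and then to conclude via a uniqueness argument based on the triangular structure of the evaluations. From Proposition \ref{prop:interpolation} (with $l=0$) we have $f_{\nabla_{q}(b_{i})}(q^{m})=b_{i}(m)$ for $i=1,2$ and $m\in\mathbb{N}$, so any Newton-series representation of $f_{\nabla_{q}(b_{1})}f_{\nabla_{q}(b_{2})}$ would interpolate $(b_{1}b_{2})(m)$ at $z=q^{m}$.

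The key step is to verify that the product $f_{\nabla_{q}(b_{1})}(z)f_{\nabla_{q}(b_{2})}(z)$ is itself of the form $f_{c}$ for some $c\in\mathcal{R}^{\mathbb{N}}$ satisfying \eqref{eq:conv-cond}. Since $B_{n}(z)$ is a polynomial in $z$ of degree $n$ with leading coefficient $1/(q)_{n}\in\mathcal{R}^{\times}$, the polynomials $B_{0},B_{1},\ldots,B_{n+m}$ form an $\mathcal{R}$-basis of polynomials of degree at most $n+m$, so there exist unique $\gamma_{n,m}^{k}\in\mathcal{R}$ (with $\gamma_{n,m}^{k}=0$ whenever $k>n+m$) such that
\[
B_{n}(z)B_{m}(z)=\sum_{k\ge 0}\gamma_{n,m}^{k}B_{k}(z).
\]
Setting $c(k):=\sum_{n,m\ge 0}\nabla_{q}(b_{1})(n)\,\nabla_{q}(b_{2})(m)\,\gamma_{n,m}^{k}$ and invoking the hypothesis $v(\nabla_{q}(b_{i})(n))\ge n$, each summand has $q$-valuation at least $n+m$; only indices with $n+m\ge k$ contribute, so the sum converges $q$-adically, $v(c(k))\ge k$, and expanding and regrouping yields $f_{\nabla_{q}(b_{1})}f_{\nabla_{q}(b_{2})}=f_{c}$ in $\mathcal{R}[[z]]$.

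It remains to identify $c$ with $\nabla_{q}(b_{1}b_{2})$. Since the factor $q^{m}-q^{j-1}$ in $B_{n}(q^{m})$ vanishes at $j=m+1$, we have $B_{n}(q^{m})=0$ for $n>m$, and a direct product computation gives $B_{m}(q^{m})=(-1)^{m}q^{m(m-1)/2}$. The equality $f_{c}(q^{m})=b_{1}(m)b_{2}(m)=(b_{1}b_{2})(m)$ therefore reduces to the triangular system $\sum_{n=0}^{m}c(n)B_{n}(q^{m})=(b_{1}b_{2})(m)$; since $q^{m(m-1)/2}$ is a non-zero-divisor in the integral domain $\mathcal{R}=\mathbb{Q}[[q]]$, induction on $m$ shows the system determines the sequence uniquely. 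By Proposition \ref{prop:interpolation}, $\nabla_{q}(b_{1}b_{2})$ satisfies the same triangular system, so $c=\nabla_{q}(b_{1}b_{2})$ and the proposition follows. The main technical obstacle is the middle paragraph: writing the product of two Newton series again as a Newton series requires the expansion of $B_{n}B_{m}$ in the Newton basis and the attendant valuation bookkeeping, whereas the interpolation step is immediate from Proposition \ref{prop:interpolation} and the triangular uniqueness argument is routine once $B_{m}(q^{m})$ is known to be a non-zero-divisor.
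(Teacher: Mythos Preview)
Your argument is correct and takes a genuinely different route from the paper. The paper proves Proposition~\ref{prop:Newton-prod} computationally: it first establishes the connection formula $B_{n}(z)=y^{n}\sum_{j=0}^{n}\frac{(y^{-1})_{n-j}}{(q)_{n-j}}B_{j}(y^{-1}z)$, uses it (with $y=q^{m}$ and the identity $B_{m}(z)B_{j}(q^{-m}z)=q^{-jm}B_{m+j}(z)$) to obtain an explicit closed formula for $c_{3}$ with $f_{c_{1}}f_{c_{2}}=f_{c_{3}}$, and then checks by a $q$-binomial calculation that $c_{3}=\nabla_{q}(b_{1}b_{2})$. Your approach instead bypasses all explicit structure constants: you only need that the product is \emph{some} $f_{c}$ with $v(c(k))\ge k$, and then you pin down $c$ by evaluating at $z=q^{m}$ and invoking the triangular uniqueness coming from $B_{n}(q^{m})=0$ for $n>m$ and $B_{m}(q^{m})=(-1)^{m}q^{m(m-1)/2}$ being a non-zero-divisor. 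This is cleaner conceptually and shorter; the paper's computation, on the other hand, yields the explicit product formula for the Newton coefficients, which your method does not produce. One small point worth making explicit in your write-up is that evaluation at $z=q^{m}$ (including $m=0$) is a well-defined ring homomorphism on the subring of $\mathcal{R}[[z]]$ consisting of series $\sum_{j}a_{j}z^{j}$ with $v(a_{j})\to\infty$; this is what justifies $f_{c}(q^{m})=f_{\nabla_{q}(b_{1})}(q^{m})f_{\nabla_{q}(b_{2})}(q^{m})$ and the passage from the power-series identity to the finite sum $\sum_{n=0}^{m}c(n)B_{n}(q^{m})$.
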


We prove two lemmas to show Proposition \ref{prop:Newton-prod}. 

\begin{lem}
For $n \in \mathbb{N}$ we have 
\begin{align}
B_{n}(z)=y^{n}\sum_{j=0}^{n}\frac{(y^{-1})_{n-j}}{(q)_{n-j}}B_{j}(y^{-1}z).  
\label{eq:B-connect}
\end{align} 
\end{lem}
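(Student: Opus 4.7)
My plan is to prove the identity by matching generating functions in an auxiliary variable $t$, thereby reducing the identity to the $q$-binomial formula \eqref{eq:q-binomial} already used in the proof of Lemma \ref{lem:key-sum}.

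First I would compute the generating function of $B_n(z)$. Using the definition $B_n(z)=z^n(z^{-1})_n/(q)_n$, the substitution $x=zt$, $a=z^{-1}$ in \eqref{eq:q-binomial} gives
\begin{align*}
\sum_{n=0}^{\infty}B_n(z)\,t^n=\sum_{n=0}^{\infty}\frac{(z^{-1})_n}{(q)_n}(zt)^n=\frac{(t)_{\infty}}{(zt)_{\infty}}.
\end{align*}
In particular, applying this with $z$ replaced by $y^{-1}z$ and $t$ replaced by $yt$ yields $\sum_{j\ge 0}B_j(y^{-1}z)(yt)^j=(yt)_{\infty}/(zt)_{\infty}$, and applying \eqref{eq:q-binomial} with $a=y^{-1}$, $x=yt$ yields $\sum_{k\ge 0}\frac{(y^{-1})_k}{(q)_k}(yt)^k=(t)_{\infty}/(yt)_{\infty}$.

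Next I would multiply the right-hand side of \eqref{eq:B-connect} by $t^n$, sum over $n\ge 0$, and recognize the result as a Cauchy product. Writing $n=j+k$ decouples the double sum, and the identity to prove becomes
\begin{align*}
\sum_{n=0}^{\infty}\Biggl(y^n\sum_{j=0}^{n}\frac{(y^{-1})_{n-j}}{(q)_{n-j}}B_j(y^{-1}z)\Biggr)t^n=\Biggl(\sum_{j=0}^{\infty}B_j(y^{-1}z)(yt)^j\Biggr)\Biggl(\sum_{k=0}^{\infty}\frac{(y^{-1})_k}{(q)_k}(yt)^k\Biggr).
\end{align*}
By the two generating-function identities above, this right-hand side equals $\frac{(yt)_{\infty}}{(zt)_{\infty}}\cdot\frac{(t)_{\infty}}{(yt)_{\infty}}=\frac{(t)_{\infty}}{(zt)_{\infty}}$, which coincides with $\sum_n B_n(z)t^n$. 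Comparing coefficients of $t^n$ gives \eqref{eq:B-connect}.

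The only real subtlety is the algebraic setting: the manipulations should be carried out in the ring $\mathbb{Q}(y,z)[[q,t]]$ (or after clearing denominators in $y$, in $\mathbb{Q}[y^{\pm1},z][[q,t]]$), where the infinite products $(t)_\infty$, $(yt)_\infty$, $(zt)_\infty$ are well-defined units and the Cauchy product rearrangement is legitimate. I expect this bookkeeping, rather than any deep combinatorial argument, to be the main thing to handle carefully; once the generating functions are in place, the identity collapses to the trivial cancellation of $(yt)_\infty$.
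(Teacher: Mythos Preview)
Your proposal is correct and is essentially the same argument as the paper's: both multiply by $t^n$, sum over $n$, recognize the right-hand side as a Cauchy product of two $q$-binomial series, and use \eqref{eq:q-binomial} to collapse $\dfrac{(t)_\infty}{(yt)_\infty}\cdot\dfrac{(yt)_\infty}{(zt)_\infty}$ to $\dfrac{(t)_\infty}{(zt)_\infty}=\sum_n B_n(z)t^n$. The only cosmetic difference is that the paper writes the second factor as $\sum_j (tz)^j (yz^{-1})_j/(q)_j$ rather than $\sum_j B_j(y^{-1}z)(yt)^j$, which is the same series.
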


\begin{proof}
The right hand side is equal to the coefficient of $t^{n}$ in 
\begin{align*}
\left(\sum_{j=0}^{\infty}(ty)^{j}\frac{(y^{-1})_{j}}{(q)_{j}}\right)
\left(\sum_{j=0}^{\infty}(tz)^{j}\frac{(yz^{-1})_{j}}{(q)_{j}}\right).  
\end{align*} 
{}From the $q$-binomial formula \eqref{eq:q-binomial}, the above product is equal to 
\begin{align*}
\frac{(t)_{\infty}}{(yt)_{\infty}}\frac{(yt)_{\infty}}{(tz)_{\infty}}=
\frac{(t)_{\infty}}{(tz)_{\infty}}=\sum_{n=0}^{\infty}t^{n}B_{n}(z). 
\end{align*}
This completes the proof. 
\end{proof}

\begin{lem}\label{lem:Newton-prod}
Suppose that $c_{1}, c_{2} \in \mathcal{R}^{\mathbb{N}}$ satisfy the condition \eqref{eq:conv-cond}. 
Set $c_{3} \in \mathcal{R}^{\mathbb{N}}$ by 
\begin{align*}
c_{3}(n):=\sum_{k=0}^{n}{n \brack k}_{q}c_{1}(k)\,
\sum_{j=0}^{k}c_{2}(n-k+j)B_{j}(q^{k}), 
\end{align*} 
where ${n \brack k}_{q}$ is the $q$-binomial coefficient 
\begin{align*}
{n \brack k}_{q}:=\frac{(q)_{n}}{(q)_{k}(q)_{n-k}}. 
\end{align*}
Then we have $f_{c_{1}}(z)f_{c_{2}}(z)=f_{c_{3}}(z)$. 
\end{lem}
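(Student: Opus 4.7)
My plan is to expand the left side as a double sum $\sum_{k,m}c_1(k)c_2(m)\,B_k(z)B_m(z)$ (which converges in $\mathcal{R}[[z]]$ since every summand has valuation $\ge k+m$, so all rearrangements below are legitimate), and then for each fixed $k$ to rewrite $B_m(z)$ using the connection formula \eqref{eq:B-connect} with $y=q^k$. This converts $f_{c_2}(z)$ into a series in the shifted basis $\{B_j(q^{-k}z)\}$; more precisely, after reindexing $m=j+l$ and using that $(q^{-k})_l$ vanishes for $l>k$, one obtains
\begin{align*}
f_{c_2}(z)=\sum_{j\ge 0}B_j(q^{-k}z)\,\sum_{l=0}^{k}c_2(j+l)\,q^{k(j+l)}\frac{(q^{-k})_l}{(q)_l}.
\end{align*}

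The second ingredient is the elementary identity
\begin{align*}
B_k(z)\,B_j(q^{-k}z)=q^{-kj}{k+j\brack k}_q B_{k+j}(z),
\end{align*}
which I would verify directly from $B_n(z)=\prod_{i=1}^n(z-q^{i-1})/(q)_n$: the product $\prod_{i=1}^k(z-q^{i-1})\prod_{i=1}^j(q^{-k}z-q^{i-1})$ telescopes, after pulling out $q^{-kj}$, into $\prod_{i=1}^{k+j}(z-q^{i-1})$. Multiplying the display above by $B_k(z)$ and applying this identity yields
\begin{align*}
B_k(z)f_{c_2}(z)=\sum_{j\ge 0}{k+j\brack k}_q B_{k+j}(z)\sum_{l=0}^{k}c_2(j+l)\,q^{kl}\frac{(q^{-k})_l}{(q)_l},
\end{align*}
where the $q^{-kj}$ from the product identity cancels the $q^{kj}$ in the earlier expansion.

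The final step is a change of variable $n=k+j$ and the observation that the remaining inner sum is exactly $\sum_{l=0}^{k}c_2(n-k+l)B_l(q^k)$, since $B_l(q^k)=q^{kl}(q^{-k})_l/(q)_l$. Multiplying by $c_1(k)$, summing over $k$, and collecting the coefficient of $B_n(z)$ reproduces $c_3(n)$ as defined in the statement, giving $f_{c_1}(z)f_{c_2}(z)=f_{c_3}(z)$.

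The genuine content of the argument is the product identity $B_k(z)B_j(q^{-k}z)=q^{-kj}\binom{k+j}{k}_q B_{k+j}(z)$, which is what lets the connection formula \eqref{eq:B-connect} interact with a factor of $B_k(z)$ and collapse back into the unshifted Newton basis. The main obstacle is not any single step but the bookkeeping of the triple sum (over $k$, $j$, $l=m-j$) together with a careful verification that every rearrangement is valid in the $q,z$-adic topology of $\mathcal{R}[[z]]$; the valuation estimate $v(c_i(n))\ge n$ combined with $v(B_n(z))\ge 0$ makes this automatic, but one should state it once and for all at the beginning.
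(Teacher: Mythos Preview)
Your argument is correct and follows essentially the same route as the paper: apply the connection formula \eqref{eq:B-connect} with $y=q^{k}$ to the factor coming from $f_{c_2}$, use the product identity for $B_k(z)B_j(q^{-k}z)$, and reindex to read off $c_3(n)$. Your version is in fact slightly more careful, since you state the product identity with the $q$-binomial coefficient $B_k(z)B_j(q^{-k}z)=q^{-kj}{k+j\brack k}_q B_{k+j}(z)$ (the paper writes it without the ${k+j\brack k}_q$, though that factor reappears in the very next displayed formula), and you make the convergence/rearrangement justification explicit.
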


\begin{proof}
Substitute \eqref{eq:B-connect} into 
$f_{c_{1}}(z)f_{c_{2}}(z)=\sum_{m, n=0}^{\infty}c_{1}(m)c_{2}(n)B_{m}(z)B_{n}(z)$ 
setting $y=q^{m}$. 
Using $B_{m}(z)B_{j}(q^{-m}z)=q^{-jm}B_{m+j}(z)$, we find that
\begin{align*}
f_{c_{1}}(z)f_{c_{2}}(z)=\sum_{m, n=0}^{\infty}\sum_{j=0}^{n}c_{1}(m)c_{2}(n) 
q^{(n-j)m}\frac{(q^{-m})_{n-j}}{(q)_{n-j}}{m+j \brack m}_{q}B_{m+j}(z).  
\end{align*} 
Then we can see that it is equal to $f_{c_{3}}(z)$. 
\end{proof}

\begin{proof}[Proof of Proposition \ref{prop:Newton-prod}]
{}From Proposition \ref{prop:interpolation} and Lemma \ref{lem:Newton-prod}, 
it suffices to prove that 
\begin{align}
\sum_{0\le i\le j \le n}\frac{q^{i+j}}{(q)_{i}}\,b_{1}(i)\,b_{2}(j)
\sum_{k=i}^{j}q^{k(k-n-1)}{n \brack k}_{q}\frac{(q^{-k})_{i}(q^{-n+k})_{j-k}}{(q)_{j-k}}=
\nabla(b_{1}b_{2})(n). 
\label{eq:f-prod-coeff}
\end{align} 
Using \eqref{eq:q-inverse}, we see that 
the second sum in the left hand side above is equal to 
\begin{align*}
(-1)^{j}q^{j(j-1)/2-nj-i}\frac{(q)_{n}}{(q)_{n-j}(q)_{j-i}} 
\sum_{k=0}^{j-i}(-1)^{k}q^{k(k-1)/2}{j-i \brack k}_{q}. 
\end{align*}
For $N \in \mathbb{N}$, we have 
\begin{align*}
\sum_{k=0}^{N}(-1)^{k}q^{k(k-1)/2}{N \brack k}_{q}=\delta_{N, 0}.  
\end{align*}
Thus we find that the left hand side of \eqref{eq:f-prod-coeff} is equal to 
\begin{align*}
\sum_{i=0}^{n}(-1)^{i}\frac{q^{-ni+i(i+1)/2}}{(q)_{i}}\frac{(q)_{n}}{(q)_{n-i}}\,b_{1}(i)\,b_{2}(i)=
\sum_{i=0}^{n}q^{i}\frac{(q^{-n})_{i}}{(q)_{i}}(b_{1}b_{2})(i)=\nabla(b_{1}b_{2})(n).  
\end{align*}
\end{proof}


\section{A $q$-analogue of Kawashima's relation}\label{sec:Kawashima}

\subsection{Kawashima's relation}

Define a product $\overline{*}$ on $\mathfrak{h}^{1}$ inductively by 
\begin{align*}
& 
1\,\overline{*}\,w=w, \quad w\,\overline{*}\,1=w, \\ 
& 
(z_{i}w_{1})\,\overline{*}\,(z_{j}w_{2})=
z_{i}(w_{1}\,\overline{*}\,z_{j}w_{2})+z_{j}(z_{i}w_{1}\,\overline{*}\,w_{2})-z_{i+j}
(w_{1}\,\overline{*}\,w_{2})  
\end{align*}
for $i, j \ge 1$ and $w, w_{1}, w_{2} \in \mathfrak{h}^{1}$. 
We also define a product $\circledast$ on $\mathfrak{h}_{>0}^{1}$ by 
the formula \eqref{eq:def-circleprod} where the product 
$*_{+}$ in the right hand side is replaced with $*$. 
We define a $\mathcal{C}$-linear map $d: \mathfrak{h}^{1} \to \mathfrak{h}^{1}$ by 
the properties $d(1)=1$ and $d(z_{i}w)=z_{i}\,d(w)+z_{i}\circ d(w)$ for $w \in \mathfrak{h}^{1}$. 
The products $\overline{*}, \circledast$ and the map $d$ can be regarded as the limit of 
$*_{-}, \circledast_{q}$ and $d_{q}$ as $\hbar\to 0$ (or $q \to 1$), respectively.  

Kawashima proved the following relation for MZV's \cite{Kaw}:  
\begin{align}
\zeta(d(\phi(w_{1}\,\overline{*}\,w_{2}))\circledast z_{1}^{n})+\sum_{k+l=n \atop k, l\ge 1}
\zeta(d(\phi(w_{1}))\circledast z_{1}^{k})\,
\zeta(d(\phi(w_{2}))\circledast z_{1}^{l})=0 \quad 
(w_{1}, w_{2} \in \mathfrak{h}_{>0}^{1}).
\label{eq:Kawashima-original}
\end{align}
Setting $n=1$ we obtain linear relations for MZV's  
\begin{align}
\zeta(z_{1}\circ d(\phi(w_{1}\,\overline{*}\,w_{2})))=0 \qquad (w_{1}, w_{2} \in \mathfrak{h}_{>0}^{1}).  
\label{eq:Kawashima-linear-MZV}
\end{align}

\subsection{A $q$-analogue of Kawashima's relation}

\begin{prop}
For $w_{1}, w_{2} \in \mathfrak{h}_{>0}^{1}$ and $n\ge 1$, the following relation holds: 
\begin{align}
\zeta_{q}(d_{q}(\phi(w_{1}*_{-}w_{2}))\circledast_{q} z_{1}^{n})+\sum_{k+l=n \atop k, l\ge 1}
\zeta_{q}(d_{q}(\phi(w_{1}))\circledast_{q} z_{1}^{k})\,
\zeta_{q}(d_{q}(\phi(w_{2}))\circledast_{q} z_{1}^{l})=0. 
\label{eq:Kawashima-quad}
\end{align}
\end{prop}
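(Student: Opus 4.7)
The plan is to specialize the product formula of Proposition~\ref{prop:Newton-prod} to $b_1 = S_{w_1}$ and $b_2 = S_{w_2}$, and to extract \eqref{eq:Kawashima-quad} by expanding both sides at $z = 1$ via Proposition~\ref{prop:Newton-expand}. Since $S_{w_1} S_{w_2} = S_{w_1 *_{-} w_2}$ by Proposition~\ref{prop:prod-S}, and since $w_1 *_{-} w_2 \in \mathfrak{h}_{>0}^{1}$, this will yield
\[
f_{\nabla_q(S_{w_1})}(z)\,f_{\nabla_q(S_{w_2})}(z) = f_{\nabla_q(S_{w_1 *_{-} w_2})}(z),
\]
once condition \eqref{eq:conv-cond} is checked for each $\nabla_q(S_w)$.

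For this verification I would invoke Bradley's duality (Proposition~\ref{prod:finite-duality}): $\nabla_q(S_w)(0) = 0$ and $\nabla_q(S_w)(n) = -s_{\phi(w)}(n-1)$ for $n \geq 1$. In the defining sum of $s_{\phi(w)}(n-1)$ the outer index is locked to $m_1 = n$, producing a factor $q^{k'_1 n}$ with $k'_1 \geq 1$ in the numerator (and a denominator whose constant term is nonzero). Hence $v(\nabla_q(S_w)(n)) \geq n$, as required.

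The core computation is then the expansion of $f_{\nabla_q(S_w)}(z)$ in powers of $(z-1)/(1-q)$. By Proposition~\ref{prop:Newton-expand}, the coefficient of $((z-1)/(1-q))^m$ for $m \geq 1$ equals $-\sum_{n \geq 0} s_{\phi(w)}(n)\, a_{z_1^m}(n)$, and Proposition~\ref{prop:prod-sa} rewrites the summand as $a_{d_q(\phi(w)) \circledast_q z_1^m}(n)$. The definition \eqref{eq:def-circleprod} of $\circledast_q$ forces the leading letter of any word appearing in $d_q(\phi(w)) \circledast_q z_1^m$ to have index at least $2$, placing it in $\mathfrak{h}^{0}$; hence the sum over $n$ converges to the $q$MZV $\zeta_q(d_q(\phi(w)) \circledast_q z_1^m)$, and
\[
f_{\nabla_q(S_w)}(z) = -\sum_{m=1}^\infty \zeta_q\bigl(d_q(\phi(w)) \circledast_q z_1^m\bigr) \left(\frac{z-1}{1-q}\right)^m.
\]

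Comparing coefficients of $((z-1)/(1-q))^n$ in the product identity then produces \eqref{eq:Kawashima-quad}: the right-hand side contributes $-\zeta_q(d_q(\phi(w_1 *_{-} w_2)) \circledast_q z_1^n)$, while the left-hand side contributes the sum over $k + l = n$ with $k, l \geq 1$ of the two scalar factors (the $n = 1$ case corresponds to the vacuous sum on the left, giving exactly the linear specialization). I expect the main obstacle to be the identification of $-\sum_n s_{\phi(w)}(n)\, a_{z_1^m}(n)$ as a single $q$MZV, which rests on Proposition~\ref{prop:prod-sa} together with the admissibility check; once this identification is in hand, everything else is formal bookkeeping with the Newton-series expansion.
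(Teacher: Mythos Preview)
Your proposal is correct and follows essentially the same route as the paper: apply Proposition~\ref{prop:Newton-prod} to $b_i=S_{w_i}$, verify condition~\eqref{eq:conv-cond} via Bradley's duality, expand each $f_{\nabla_q(S_w)}$ at $z=1$ using Proposition~\ref{prop:Newton-expand} and Proposition~\ref{prop:prod-sa}, and compare coefficients. Your explicit admissibility check that $d_q(\phi(w))\circledast_q z_1^m\in\mathfrak{h}^0$ is a detail the paper leaves implicit, but otherwise the arguments coincide.
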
 

\begin{proof}
If $w \in \mathfrak{h}_{>0}^{1}$,  
we have $\nabla_{q}(S_{w})(0)=0$ and 
\begin{align*}
v(\nabla_{q}(S_{w})(n))=v(-s_{\phi(w)}(n-1))\ge n \qquad (n>0).  
\end{align*}
{}from Proposition \ref{prod:finite-duality}. 
Hence $\nabla_{q}(S_{w})$ satisfies the condition \eqref{eq:conv-cond}, and 
the series $F_{w}(z):=f_{\nabla(S_{w})}(z)$ is well-defined. 
Expanding $F_{w}(z)$ at $z=1$ using Proposition \ref{prop:Newton-expand} and 
Proposition \ref{prop:prod-sa}, 
we see that 
\begin{align}
F_{w}(z)&=\sum_{m=1}^{\infty}
\left(\sum_{n=1}^{\infty}\nabla(S_{w})(n)\,a_{z_{1}^{m}}(n-1)\right)
\left(\frac{z-1}{1-q}\right)^{m} 
\label{eq:F-expansion} \\ 
&=-\sum_{m=1}^{\infty}
\left(\sum_{n=1}^{\infty}s_{\phi(w)}(n-1)\,a_{z_{1}^{m}}(n-1)\right)
\left(\frac{z-1}{1-q}\right)^{m}  
\nonumber \\ 
&=-\sum_{m=1}^{\infty}
\left(\sum_{n=0}^{\infty}a_{d_{q}(\phi(w))\circledast_{q} z_{1}^{m}}(n)\right)
\left(\frac{z-1}{1-q}\right)^{m}  
\nonumber \\  
&=-\sum_{m=1}^{\infty}
\zeta_{q}(d_{q}(\phi(w))\circledast_{q} z_{1}^{m})
\left(\frac{z-1}{1-q}\right)^{m}.   
\nonumber 
\end{align}

{}From Lemma \ref{prop:prod-S} and Proposition \ref{prop:Newton-prod}, 
we have $F_{w_{1}}F_{w_{2}}=F_{w_{1}*_{-}w_{2}}$ for $w_{1}, w_{2} \in \mathfrak{h}_{>0}^{1}$.  
Substituting \eqref{eq:F-expansion}, we get the relation \eqref{eq:Kawashima-quad}. 
\end{proof}

\begin{cor}
For $w_{1}, w_{2} \in \mathfrak{h}_{>0}^{1}$, we have 
\begin{align*}
\zeta_{q}^{\star}(z_{1}\circ \phi(w_{1}*_{-}w_{2}))=0.  
\end{align*}
\end{cor}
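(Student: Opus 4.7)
The plan is to derive the corollary as the $n=1$ specialization of the quadratic relation \eqref{eq:Kawashima-quad} and then rewrite the single surviving term on the left-hand side using the machinery from Section \ref{sec:alg-str}. When $n=1$, the indexing set $\{(k,l) : k+l=1,\ k,l\ge 1\}$ is empty, so the quadratic sum in \eqref{eq:Kawashima-quad} vanishes identically and the relation collapses to
\[
\zeta_{q}\bigl(d_{q}(\phi(w_{1}*_{-}w_{2}))\circledast_{q} z_{1}\bigr)=0.
\]
Thus everything reduces to identifying the argument with $z_{1}\circ\phi(w_{1}*_{-}w_{2})$ after applying $d_{q}$, so that Corollary \ref{cor:to-star} can convert the result into a $q$MZSV.

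First I would observe that for any word $z_{i}w'\in\mathfrak{h}^{1}_{>0}$ the definition \eqref{eq:def-circleprod} of $\circledast_{q}$ gives
\[
(z_{i}w')\circledast_{q} z_{1}=(z_{i}\circ z_{1})(w'*_{+}1)=z_{i+1}w'=z_{1}\circ(z_{i}w'),
\]
so by $\mathcal{C}$-linearity $v\circledast_{q}z_{1}=z_{1}\circ v$ for every $v\in\mathfrak{h}^{1}_{>0}$. Applied to $v=d_{q}(\phi(w_{1}*_{-}w_{2}))$ (which lies in $\mathfrak{h}^{1}_{>0}$ because $*_{-}$, $\phi$, and $d_{q}$ all preserve non-constancy), this turns the surviving term into $\zeta_{q}(z_{1}\circ d_{q}(\phi(w_{1}*_{-}w_{2})))=0$.

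Next I would invoke Lemma \ref{lem:action-commute}, which asserts that $d_{q}$ commutes with the $\mathfrak{z}$-action $\circ$, to rewrite
\[
z_{1}\circ d_{q}(\phi(w_{1}*_{-}w_{2}))=d_{q}\bigl(z_{1}\circ\phi(w_{1}*_{-}w_{2})\bigr).
\]
Then Corollary \ref{cor:to-star} gives $\zeta_{q}(d_{q}(u))=\zeta_{q}^{\star}(u)$ for any $u\in\mathfrak{h}^{1}$, and applying this with $u=z_{1}\circ\phi(w_{1}*_{-}w_{2})$ yields the desired identity.

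There is no real obstacle here: the entire argument is a short symbolic manipulation built on already-proved facts (the $n=1$ case of Proposition \ref{eq:Kawashima-quad}, the commutation lemma, and the star-to-non-star conversion). The only point that requires a moment of care is checking that $v\circledast_{q}z_{1}$ really collapses to the single-letter shift $z_{1}\circ v$, which follows directly from the recursion defining $\circledast_{q}$ together with the fact that $w'*_{+}1=w'$.
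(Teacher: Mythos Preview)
Your proposal is correct and follows exactly the approach the paper uses: set $n=1$ in \eqref{eq:Kawashima-quad}, then apply Lemma \ref{lem:action-commute} and Corollary \ref{cor:to-star}. You have simply made explicit the small intermediate step $v\circledast_{q}z_{1}=z_{1}\circ v$ that the paper leaves to the reader.
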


\begin{proof}
Set $n=1$ in \eqref{eq:Kawashima-quad}. Then it follows from 
Lemma \ref{lem:action-commute} and Corollary \ref{cor:to-star}.  
\end{proof}

Let us rewrite the quadratic relation \eqref{eq:Kawashima-quad} 
into a similar form to Kawashima's relation \eqref{eq:Kawashima-original}. 
Consider the map 
\begin{align*}
\Psi:=\phi \, d_{q}^{-1} d \, \phi \, : \mathfrak{h}^{1} \to \mathfrak{h}^{1}.  
\end{align*}
Note that $\Psi(1)=1$. 

\begin{lem}\label{lem:Psi-recursion}
Let $w \in \mathfrak{h}^{1}$. Then 
\begin{align*}
& \Psi(z_{1}w)=z_{1}\Psi(w), \\ 
& \Psi(z_{i}w)=(z_{1}-\hbar z_{0})\circ \Psi(z_{i-1}w) \qquad (i \ge 2).   
\end{align*}
\end{lem}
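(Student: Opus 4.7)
The plan is to factor $\Psi = \phi \circ D \circ \phi$ with $D := d_q^{-1} d$, and first derive a clean recursion for $D$ on words. By the defining recursions,
\[
d(z_i w) = z_i d(w) + z_i \circ d(w), \qquad d_q^{-1}(z_i w) = z_i d_q^{-1}(w) - z_i \circ_+ d_q^{-1}(w),
\]
and since the argument of Lemma~\ref{lem:action-commute} applies verbatim to $d$ as well, both $d$ and $d_q^{-1}$ commute with the $\mathfrak{z}$-action $\circ$. Together with the identity $z_i \circ_+ - z_i \circ = \hbar\, z_{i-1} \circ$, this produces
\[
D(z_i w) = z_i D(w) - \hbar\, z_{i-1} \circ D(w) \qquad (i \ge 1, \, w \in \mathfrak{h}^1).
\]

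Next, by direct bookkeeping of $I_u^c$, I would establish two combinatorial identities for the involution $\phi$:
\[
\phi(z_1 w) = z_1 \circ \phi(w) \quad (w \in \mathfrak{h}^1_{>0}), \qquad \phi(z_i w) = z_1 \cdot \phi(z_{i-1} w) \quad (i \ge 2, \, w \in \mathfrak{h}^1).
\]
The first identity comes from $I_{z_1 w}^c = 1 + I_w^c$ for $w \in \mathfrak{h}^1_{>0}$, and the second from the relation $I_{z_i w}^c = \{1\} \cup (1 + I_{z_{i-1} w}^c)$ valid for all $i \ge 2$ and $w \in \mathfrak{h}^1$. Applying $\phi$ to the first identity and using $\phi^2 = \mathrm{id}$ yields the equivalent form $\phi(z_1 \circ X) = z_1 \phi(X)$ for $X \in \mathfrak{h}^1_{>0}$.

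Assembling: for $w \in \mathfrak{h}^1_{>0}$,
\[
\Psi(z_1 w) = \phi D (z_1 \circ \phi(w)) = \phi(z_1 \circ D\phi(w)) = z_1\, \phi D \phi(w) = z_1 \Psi(w),
\]
using the first $\phi$-identity, commutativity of $D$ with $z_1 \circ$, and the inverse form of the same identity; the case $w = 1$ reduces to the immediate computation $\Psi(z_1) = z_1$. For $i \ge 2$, the second $\phi$-identity together with the $D$-recursion gives $D\phi(z_i w) = z_1\, D\phi(z_{i-1} w) - \hbar\, z_0 \circ D\phi(z_{i-1} w)$; applying $\phi$, using the inverse form of the first $\phi$-identity on the first term, and the convention $\phi(z_0 \circ X) = \phi(X)$ for $X \in \mathfrak{h}^1_{>0}$ (valid since $D\phi(z_{i-1} w) \in \mathfrak{h}^1_{>0}$) on the second, produces $z_1 \circ \Psi(z_{i-1} w) - \hbar\, \Psi(z_{i-1} w) = (z_1 - \hbar z_0) \circ \Psi(z_{i-1} w)$, where the final equality uses $z_0 \circ \Psi(z_{i-1} w) = \Psi(z_{i-1} w)$. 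The main obstacle is the combinatorial verification of the two $\phi$-identities: although intuitive, they require careful tracking of how the index set $I_u^c$ transforms under modifications of the leading letter of $u$.
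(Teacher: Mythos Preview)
Your argument is correct and follows essentially the same route as the paper: both proofs rest on the identity $\phi(z_i w)=z_1^{\,i-1}(z_1\circ\phi(w))$ (which you split into the two cases $i=1$ and $i\ge 2$), the fact that $d$ and $d_q^{-1}$ commute with the $\circ$-action, and a recursion for $D=d_q^{-1}d$ applied to words beginning with $z_1$. Your version of that recursion,
\[
D(z_1 u)=z_1\,D(u)-\hbar\, z_0\circ D(u),
\]
is the correct one; the paper's displayed formula $(d_q^{-1}d)(z_1 w)=(z_1-\hbar z_0)\circ(d_q^{-1}d)(w)$ is in fact a slip (take $w=z_1$: the left side is $z_1^2-\hbar z_1$ while the right side is $z_2-\hbar z_1$), and the $(z_1-\hbar z_0)\circ$ only appears \emph{after} applying $\phi$ to both terms, exactly as you do.
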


\begin{proof}
Note that $\phi(z_{i}w)=z_{1}^{i-1}(z_{1}\circ\phi(w))$ for $i\ge 1$ and $w \in \mathfrak{h}^{1}$. 
Since the maps $d_{q}$ and $d$ commute with the action $\circ$ of $\mathfrak{z}$, 
we get $\Psi(z_{1}w)=z_{1}\Psi(w)$. 
To show the second formula, use the identity 
\begin{align*}
(d_{q}^{-1}d)(z_{1}w)=(z_{1}-\hbar z_{0})\circ(d_{q}^{-1}d)(w) \qquad (w \in \mathfrak{h}^{1})
\end{align*} 
following from the definition of $d_{q}$ and $d$. 
\end{proof}

\begin{prop}\label{prop:Psi-recursion}
Let $w \in \mathfrak{h}^{1}$. Then $\Psi(z_{i}w)=\xi_{i}\Psi(w)$ for $i\ge 1$, 
where $\xi_{i} \in \mathfrak{z}$ is given by 
\begin{align*}
\xi_{i}:=\sum_{k=0}^{i-1}\binom{i-1}{k}(-\hbar)^{i-1-k}z_{k+1}.  
\end{align*} 
\end{prop}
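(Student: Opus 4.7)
The plan is to prove the proposition by induction on $i$, using Lemma \ref{lem:Psi-recursion} as the engine and reducing the inductive step to a Pascal's rule identity in $\mathfrak{z}$.

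For the base case $i=1$, note that $\xi_{1}=z_{1}$, so the claim $\Psi(z_{1}w)=\xi_{1}\Psi(w)=z_{1}\Psi(w)$ is exactly the first formula of Lemma \ref{lem:Psi-recursion}.

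For the inductive step, suppose $i\ge 2$ and that $\Psi(z_{i-1}w)=\xi_{i-1}\Psi(w)$. By the second formula of Lemma \ref{lem:Psi-recursion},
\begin{align*}
\Psi(z_{i}w)=(z_{1}-\hbar z_{0})\circ \Psi(z_{i-1}w)=(z_{1}-\hbar z_{0})\circ\bigl(\xi_{i-1}\Psi(w)\bigr).
\end{align*}
The action $\circ$ of $\mathfrak{z}$ on $\mathfrak{h}^{1}$ is defined to modify only the leftmost letter: for $k\ge 1$, one has $z_{1}\circ(z_{k}u)=z_{k+1}u$ and $z_{0}\circ(z_{k}u)=z_{k}u$ (since $z_{k}u\in\mathfrak{h}_{>0}^{1}$). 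Writing $\xi_{i-1}=\sum_{k=0}^{i-2}\binom{i-2}{k}(-\hbar)^{i-2-k}z_{k+1}$ and using this letter-by-letter, I would conclude that
\begin{align*}
(z_{1}-\hbar z_{0})\circ\bigl(\xi_{i-1}\Psi(w)\bigr)=\bigl((z_{1}-\hbar z_{0})\circ\xi_{i-1}\bigr)\Psi(w),
\end{align*}
where on the right, $(z_{1}-\hbar z_{0})\circ z_{k}:=z_{k+1}-\hbar z_{k}$ for each summand.

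It then remains to verify the identity $(z_{1}-\hbar z_{0})\circ\xi_{i-1}=\xi_{i}$ inside $\mathfrak{z}$. Expanding the left-hand side gives two sums,
\begin{align*}
\sum_{k=0}^{i-2}\binom{i-2}{k}(-\hbar)^{i-2-k}z_{k+2}\;+\;\sum_{k=0}^{i-2}\binom{i-2}{k}(-\hbar)^{i-1-k}z_{k+1},
\end{align*}
and after shifting the summation index in the first sum (set $j=k+1$), the coefficient of $z_{j+1}$ for $1\le j\le i-2$ becomes $\binom{i-2}{j-1}(-\hbar)^{i-1-j}+\binom{i-2}{j}(-\hbar)^{i-1-j}$, which equals $\binom{i-1}{j}(-\hbar)^{i-1-j}$ by Pascal's rule. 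The extreme terms $j=0$ and $j=i-1$ match $\xi_{i}$ trivially. This gives exactly $\xi_{i}$ and closes the induction.

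There is no real obstacle here: the content is Lemma \ref{lem:Psi-recursion} combined with a one-line binomial identity. The only point requiring care is the justification that the $\circ$-action, whose definition is piecewise on words versus constants in $\mathcal{C}$, propagates as $((z_{1}-\hbar z_{0})\circ\xi_{i-1})\Psi(w)$; this is safe because $\Psi(w)$ is prefixed by a letter from $\xi_{i-1}$ (so one never hits the special rule $z_{0}\circ c=0$ for $c\in\mathcal{C}$).
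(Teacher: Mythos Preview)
Your proof is correct and follows essentially the same route as the paper: the paper's proof likewise reduces, via Lemma~\ref{lem:Psi-recursion}, to the identity $(z_{1}-\hbar z_{0})\circ\xi_{i}=\xi_{i+1}$ and then says this ``can be checked by direct calculation.'' Your Pascal's rule computation is exactly that direct calculation, and your remark about why the $z_{0}\circ$-action behaves well on $\xi_{i-1}\Psi(w)$ is a welcome bit of extra care that the paper leaves implicit.
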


\begin{proof}
Proposition \ref{lem:Psi-recursion} implies that it suffices to prove 
$(z_{1}-\hbar z_{0})\circ \xi_{i}=\xi_{i+1}$ for $i \ge 1$. 
This can be checked by direct calculation.  
\end{proof}

\begin{prop}\label{prop:intertwine}
Let $w_{1}, w_{2} \in \mathfrak{h}^{1}$. Then 
\begin{align*}
\Psi(w_{1})*_{-}\Psi(w_{2})=\Psi(w_{1}\,\overline{*}\,\,w_{2}).
\end{align*} 
\end{prop}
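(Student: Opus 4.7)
The plan is to proceed by induction on $\mathrm{dep}(w_1)+\mathrm{dep}(w_2)$, reducing by $\mathcal{C}$-bilinearity of both products to the case where $w_1$ and $w_2$ are words. The base cases in which one argument equals $1$ are immediate from $\Psi(1)=1$ and the unit property of $\overline{*}$ and $*_{-}$.

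For the inductive step I would write $w_1=z_iu_1$ and $w_2=z_ju_2$, apply $\Psi$ to the defining recursion of $\overline{*}$, and invoke Proposition \ref{prop:Psi-recursion} together with the induction hypothesis to obtain
$$\Psi(w_1\,\overline{*}\,w_2)=\xi_i\bigl(\Psi(u_1)*_{-}\Psi(w_2)\bigr)+\xi_j\bigl(\Psi(w_1)*_{-}\Psi(u_2)\bigr)-\xi_{i+j}\bigl(\Psi(u_1)*_{-}\Psi(u_2)\bigr).$$
For the other side, $\Psi(w_1)*_{-}\Psi(w_2)=\xi_i\Psi(u_1)*_{-}\xi_j\Psi(u_2)$ can be expanded by writing $\xi_i$ and $\xi_j$ as $\mathcal{C}$-linear combinations of the $z_{k+1}$'s and applying the defining recursion \eqref{eq:prod-recurrence} of $*_{-}$ term by term. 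Resumming via $\xi_j\Psi(u_2)=\Psi(w_2)$ and its analog collapses the first two classes of terms into $\xi_i\bigl(\Psi(u_1)*_{-}\Psi(w_2)\bigr)+\xi_j\bigl(\Psi(w_1)*_{-}\Psi(u_2)\bigr)$, and the remaining class becomes $(\xi_i\circ_{-}\xi_j)\bigl(\Psi(u_1)*_{-}\Psi(u_2)\bigr)$, where $\circ_{-}$ has been extended $\mathcal{C}$-bilinearly to $\mathfrak{z}$.

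The heart of the argument, and the step I expect to require the real work, is the purely algebraic identity $\xi_i\circ_{-}\xi_j=-\xi_{i+j}$ in $\mathfrak{z}$. To prove it, I would first note that for $a,b\ge 1$,
$$z_a\circ_{-}z_b=-z_{a+b}+\hbar z_{a+b-1}=-(z_1-\hbar z_0)\circ z_{a+b-1},$$
where $z_0$ acts as the identity on $\mathfrak{h}_{>0}^{1}$ by convention. Extending $\mathcal{C}$-bilinearly and applying Vandermonde's convolution $\sum_{k+l=m}\binom{i-1}{k}\binom{j-1}{l}=\binom{i+j-2}{m}$ compresses the resulting double sum into $-(z_1-\hbar z_0)\circ\xi_{i+j-1}$, and this equals $-\xi_{i+j}$ by the one-step recursion $\xi_{m+1}=(z_1-\hbar z_0)\circ\xi_m$ established inside the proof of Proposition \ref{prop:Psi-recursion}. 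Substituting $\xi_i\circ_{-}\xi_j=-\xi_{i+j}$ back into the expansion matches the two sides term by term and closes the induction.
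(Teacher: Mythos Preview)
Your proposal is correct and follows essentially the same route as the paper: induction on depth, reduction to words, use of $\Psi(z_iw)=\xi_i\Psi(w)$ to unwind both sides, and the key identity $\xi_i\circ_{-}\xi_j=-\xi_{i+j}$ to match the third terms. The only difference is cosmetic: the paper dismisses $\xi_i\circ_{-}\xi_j=-\xi_{i+j}$ as ``direct calculation'', whereas you supply the Vandermonde argument explicitly.
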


\begin{proof}
We can assume that $w_{1}$ and $w_{2}$ are words. 
Let us prove the proposition by induction on the depth of $w_{1}$ and $w_{2}$. 
If $w_{1}=1$ or $w_{2}=1$, it is trivial. 
Set $w_{1}=z_{i}w_{1}'$ and $w_{2}=z_{j}w_{2}'$. 
{}From Proposition \ref{prop:Psi-recursion} and the induction hypothesis, we find 
\begin{align*}
\Psi(w_{1})*_{-}\Psi(w_{2})&=(\xi_{i}\Psi(w_{1}'))*_{-}(\xi_{j}\Psi(w_{2}')) \\ 
&=\xi_{i}\Psi(w_{1}'\,\overline{*}\,w_{2})+\xi_{j}\Psi(w_{1}\,\overline{*}\,w_{2}')+(\xi_{i}\circ_{-}\xi_{j})
\Psi(w_{1}'\,\overline{*}\,w_{2}').   
\end{align*}
By direct calculation we see that $\xi_{i}\circ_{-}\xi_{j}=-\xi_{i+j}$ for $i, j\ge 1$.  
Therefore we get 
\begin{align*}
\Psi(w_{1})*_{-}\Psi(w_{2})&=\xi_{i}\Psi(w_{1}'\,\overline{*}\,w_{2})+
\xi_{j}\Psi(w_{1}\,\overline{*}\,w_{2}')-\xi_{i+j}\Psi(w_{1}'\,\overline{*}\,w_{2}') \\ 
&=\Psi(z_{i}(w_{1}'\,\overline{*}\,w_{2})+z_{j}(w_{1}\,\overline{*}\,w_{2}')-z_{i+j}
(w_{1}'\,\overline{*}\,w_{2}')) \\ 
&=\Psi(w_{1}\,\overline{*}\,w_{2}).   
\end{align*}
\end{proof}

Now we are ready to derive a $q$-analogue of Kawashima's relations: 

\begin{thm}\label{thm:Kawashima-quad-modified}
For $w_{1}, w_{2} \in \mathfrak{h}_{>0}^{1}$ and $n\ge 1$, the following relation holds: 
\begin{align}
\zeta_{q}(d(\phi(w_{1}\,\overline{*}\,\,w_{2}))\circledast_{q} z_{1}^{n})+\sum_{k+l=n \atop k, l\ge 1}
\zeta_{q}(d(\phi(w_{1}))\circledast_{q} z_{1}^{k})\,
\zeta_{q}(d(\phi(w_{2}))\circledast_{q} z_{1}^{l})=0. 
\label{eq:Kawashima-quad-modified}
\end{align}
\end{thm}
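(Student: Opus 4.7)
The plan is to derive Theorem \ref{thm:Kawashima-quad-modified} from the already established quadratic relation \eqref{eq:Kawashima-quad} by performing the substitution $w_i \mapsto \Psi(w_i)$, where $\Psi = \phi \, d_q^{-1} d \, \phi$. The intertwining identity of Proposition \ref{prop:intertwine} is precisely tailored so that this substitution converts $*_{-}$-products on the left into $\overline{*}$-products, while a short algebraic manipulation using $\phi^2 = \mathrm{id}$ converts $d_q$ into $d$ inside all the $\zeta_q$-arguments. I expect the verification to be essentially bookkeeping.

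First I would check that the substitution is legitimate, i.e.\ that $\Psi$ maps $\mathfrak{h}_{>0}^{1}$ into itself, so that $\Psi(w_1), \Psi(w_2) \in \mathfrak{h}_{>0}^{1}$ whenever $w_1, w_2 \in \mathfrak{h}_{>0}^{1}$. This is immediate from $\Psi(1) = 1$ and the recursion $\Psi(z_i w) = \xi_i \Psi(w)$ of Proposition \ref{prop:Psi-recursion}, because $\xi_i \in \mathfrak{z}$ has no constant term. Next I would perform the key algebraic simplification: since $\phi^2 = \mathrm{id}$, we have
\begin{align*}
\phi \circ \Psi = \phi \circ \phi \circ d_q^{-1} \circ d \circ \phi = d_q^{-1} \circ d \circ \phi,
\end{align*}
and therefore
\begin{align*}
d_q \bigl( \phi( \Psi(w) ) \bigr) = d_q \bigl( d_q^{-1} d \phi(w) \bigr) = d \bigl( \phi(w) \bigr) \qquad (w \in \mathfrak{h}^{1}).
\end{align*}

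Now I would substitute $w_i \mapsto \Psi(w_i)$ in the proved relation \eqref{eq:Kawashima-quad}. The linear arguments of the $\zeta_q$'s in the sum become $d_q(\phi(\Psi(w_i))) \circledast_q z_1^l = d(\phi(w_i)) \circledast_q z_1^l$ by the identity above, while the first term's argument requires one extra step: by Proposition \ref{prop:intertwine},
\begin{align*}
\Psi(w_1) *_{-} \Psi(w_2) = \Psi(w_1 \,\overline{*}\, w_2),
\end{align*}
so $d_q\bigl(\phi(\Psi(w_1) *_{-} \Psi(w_2))\bigr) = d_q\bigl(\phi(\Psi(w_1 \,\overline{*}\, w_2))\bigr) = d\bigl(\phi(w_1 \,\overline{*}\, w_2)\bigr)$. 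Assembling these substitutions gives exactly \eqref{eq:Kawashima-quad-modified}.

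The only conceptual subtlety is recognising that $\Psi$ is the correct change of variable: it is designed precisely so that $\phi d_q^{-1} d \phi$ converts the known $*_{-}$-based identity into the desired $\overline{*}$-based one, with the conjugation by $\phi$ absorbing the outer $\phi$'s in the arguments of $\zeta_q$ and the factor $d_q^{-1}$ cancelling the outer $d_q$ to leave $d$. I do not anticipate any real obstacle — Propositions \ref{prop:intertwine} and \ref{prop:Psi-recursion} plus the trivial identity $\phi^2 = \mathrm{id}$ do all the work.
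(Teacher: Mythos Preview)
Your proposal is correct and is essentially identical to the paper's own proof: substitute $w_i \mapsto \Psi(w_i)$ into \eqref{eq:Kawashima-quad}, use Proposition~\ref{prop:intertwine}, and simplify via $d_q \phi \Psi = d \phi$. The only addition you make is the (implicitly needed but unstated in the paper) verification that $\Psi$ preserves $\mathfrak{h}_{>0}^{1}$, which is a helpful detail.
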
 

\begin{proof}
Substitute $\Psi(w_{i})$ into $w_{i} \, (i=1, 2)$ in the quadratic relation \eqref{eq:Kawashima-quad}. 
Then we get the relation \eqref{eq:Kawashima-quad-modified} using 
Proposition \ref{prop:intertwine} and $d_{q}\phi\Psi=d \phi$. 
\end{proof}

Setting $n=1$ in \eqref{eq:Kawashima-quad-modified}, we obtain linear relations for $q$MZV's 
in the same form as \eqref{eq:Kawashima-linear-MZV}: 

\begin{cor}\label{cor:Kawashima-linear}
For $w_{1}, w_{2} \in \mathfrak{h}_{>0}^{1}$, we have 
\begin{align*}
\zeta_{q}(z_{1}\circ d(\phi(w_{1}\,\overline{*}\,\,w_{2})))=0.   
\end{align*}
\end{cor}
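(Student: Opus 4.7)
The plan is to obtain the corollary as an immediate specialization of Theorem \ref{thm:Kawashima-quad-modified} at $n = 1$. Since there are no pairs $(k, l)$ of positive integers with $k + l = 1$, the quadratic sum on the left-hand side of \eqref{eq:Kawashima-quad-modified} is empty for $n = 1$, and the relation collapses to the single linear term
\begin{align*}
\zeta_q\bigl(d(\phi(w_1 \,\overline{*}\, w_2)) \circledast_q z_1\bigr) = 0.
\end{align*}
So the whole argument reduces to identifying this surviving term with $\zeta_q(z_1 \circ d(\phi(w_1 \,\overline{*}\, w_2)))$.

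The next step is the observation that the binary product $\circledast_q$ with the single letter $z_1$ coincides with the $\mathfrak{z}$-action of $z_1$ on the left. By $\mathcal{C}$-linearity it suffices to treat a word $u = z_i v \in \mathfrak{h}_{>0}^1$ with $i \ge 1$ and $v \in \mathfrak{h}^1$. From the definition \eqref{eq:def-circleprod} together with $v *_+ 1 = v$ and $z_i \circ z_1 = z_{i+1}$ one gets
\begin{align*}
(z_i v) \circledast_q z_1 = (z_i \circ z_1)(v *_+ 1) = z_{i+1} v,
\end{align*}
while the $\mathfrak{z}$-action gives $z_1 \circ (z_i v) = (z_1 \circ z_i) v = z_{i+1} v$. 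Hence $u \circledast_q z_1 = z_1 \circ u$ for every $u \in \mathfrak{h}_{>0}^1$.

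Applying this identity with $u = d(\phi(w_1 \,\overline{*}\, w_2))$, which lies in $\mathfrak{h}_{>0}^1$ because $\overline{*}$, $\phi$ and $d$ each preserve $\mathfrak{h}_{>0}^1$, converts the surviving term into $\zeta_q(z_1 \circ d(\phi(w_1 \,\overline{*}\, w_2))) = 0$, which is exactly the asserted relation. There is essentially no obstacle here: granted Theorem \ref{thm:Kawashima-quad-modified}, the corollary rests only on the empty-sum remark for $n=1$ and the trivial identification $u \circledast_q z_1 = z_1 \circ u$, so the only thing one must be careful about is verifying the domain condition $d(\phi(w_1 \,\overline{*}\, w_2)) \in \mathfrak{h}_{>0}^1$ needed to make $\circledast_q$ applicable.
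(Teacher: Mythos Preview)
Your proposal is correct and follows the same route as the paper: the paper simply remarks that one sets $n=1$ in Theorem~\ref{thm:Kawashima-quad-modified}, leaving the identification $u\circledast_q z_1=z_1\circ u$ implicit, and you have spelled that step out explicitly.
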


\bigskip 
\section*{Acknowledgments}

The research of the author is supported by Grant-in-Aid for 
Young Scientists (B) No.\,20740088. 
The author is grateful to Yasuo Ohno, Jun-ichi Okuda  and Tatsushi Tanaka for helpful informations.

\end{document}